\numberwithin{equation}{section}
\newtheorem{Theorem}{Theorem}[section]
\newtheorem{Lemma}[Theorem]{Lemma}
\newtheorem{Corollary}[Theorem]{Corollary}
\theoremstyle{definition}
\newtheorem{Example}[Theorem]{Example}
\newtheorem{Remark}[Theorem]{Remark}
\newfont{\deffont}{cmbxti10}
\newfont{\german}{eufm10}
\newfont{\mymath}{cmr12}
\newcommand\lieg{\mathfrak{g}}
\newcommand\lieh{\mathfrak{h}}
\newcommand\liek{\mathfrak{k}}
\newcommand\liel{\mathfrak{l}}
\newcommand\tomega{\tilde \omega}
\newcommand\hook{\mathbin{\raise0.5pt\hbox{\hbox{{\vbox{\hrule height.4pt width6pt depth0pt}}}\vrule height6pt width.4pt depth0pt}\,}}
\newcommand\cTM{T^*\kern-2ptM}
\newcommand\Ad{\text{\rm Ad}}
\newcommand{\ad}{\text{\rm ad}}
\newcommand\omegaX{\omega_{\kern -1 pt X}}
\newcommand\ann{\text{\rm ann}}
\newcommand\CalPf{\mathcal{P}\text{\it \kern -.3pt f}}
\newcommand\der{\rm{Der}}
\newcommand\TM{T\kern -2pt M}
\newcommand\real{\text{\bf R}}
\newcommand\mycap{\hbox{\ $\rlap{\kern -.3pt $\cap$}\raise.8pt\hbox{$\scriptstyle+$}$\ } }
\newcommand\Ao{{\kern-2.3pt}\stackrel{\scriptscriptstyle o}{A}{}{\kern-2.3pt}}
\newcommand\Bo{{\kern-2.3pt}\stackrel{\scriptscriptstyle o}{B}{}{\kern-2.3pt}}
\newcommand\Ko{{\kern-2.3pt}\stackrel{\scriptscriptstyle o}{K}{}{\kern-2.3pt}}
\newcommand\Co{{\kern-2.3pt}\stackrel{\scriptscriptstyle o}{C}{}{\kern-2.3pt}}
\newcommand\Qo{{\kern-2.3pt}\stackrel{\scriptscriptstyle o}{Q}{}{\kern-2.3pt}}
\newcommand\Mo{{\kern-2.3pt}\stackrel{\scriptscriptstyle o}{M}{}{\kern-2.3pt}}
\newcommand\Xo{{\stackrel{\scriptscriptstyle o}{X}}{\kern-1.3pt}}
\newcommand\yo{{\stackrel{\scriptscriptstyle o}{y}}{\kern-1.3pt}}
\DeclareMathOperator{\spn}{span}
\newcommand{\StTag}[1]{ \label{st:#1}
\ifthenelse{\boolean{proofmode}}{\ \marginpar{\quad\scriptsize st:#1} }{}      }
\newcommand{\EqTag}[1]{
\ifthenelse{\boolean{proofmode}}
{ {\label{eq:#1}}
  \stepcounter{equation}
  \tag{\theequation \rlap{\kern 23 pt{\scriptsize eq:#1}}}
}
{\label{eq:#1}}
 }
\newcommand{\EqRef}[1]{\eqref{eq:#1}}
\newcommand{\StRef}[1]{\ref{st:#1}}
\newcolumntype{C}{>\scriptstyle>{$}c <{$} }
\newcolumntype{L}{>\scriptstyle >{$} l <{$} }
\newcommand\CalI{\mathcal{I}}
\newcommand\CalL{\mathcal{L}}
\newcommand\CalN{\mathcal{N}}
\newcommand\sd{\mathbin{ \raise0.0pt\hbox{ \vrule height5pt width.4pt depth0pt}\!\times}}
\newcommand\barC{
	\hbox{\kern 2.3 true pt
	\vbox{\hrule width 6.5  true pt height .3 true pt \kern .9 true pt
	\hbox{\kern -0.8 true pt $C$}}}}
\newcommand\barM{
	\hbox{\kern 2.3 true pt
	\vbox{\hrule width 8.5  true pt height .3 true pt \kern .9 true pt
	\hbox{\kern -2.3 true pt $M$}}}}
\newcommand\barU{
	\hbox{\kern .8 true pt
	\vbox{\hrule width 6.5  true pt height .3 true pt \kern .9 true pt
	\hbox{\kern -.8 true pt $U$}}}}
\newcommand\barCalI{
	\hbox{\kern 4.3 true pt
	\vbox{\hrule width 6.5  true pt height .3 true pt \kern .9 true pt
	\hbox{ \kern -4.3 true pt  $\CalI$}}}}
\newcommand\barXi{
	\hbox{\kern 1 true pt
	\vbox{\hrule width 6.5  true pt height .3 true pt \kern .9 true pt
	\hbox{\kern 1 true pt $\Xi$}}}}
\newcommand\Largehat{\smash{\raise -7.5 pt \hbox{\rm\Large\^{}}}}
\newcommand\LARGEhat{\smash{\raise -9.5 pt \hbox{\rm\LARGE\^{}}}}
\newcommand\hugehat{\smash{\raise -7.5 pt \hbox{\rm\huge\^{}}}}
\newcommand\Hugehat{\smash{\raise -7.5 pt \hbox{\rm\Huge\^{}}}}
\newcommand\Largecheck{\smash{\raise -7.5 pt \hbox{\rm\Large\v{}}}}
\newcommand\LARGEcheck{\smash{\raise -9 pt \hbox{\rm\LARGE\v{}}}}
\newcommand\hugecheck{\smash{\raise -7.5 pt \hbox{\rm\huge\v{}}}}
\newcommand\Hugecheck{\smash{\raise -7.5 pt \hbox{\rm\Huge\v{}}}}
\newcommand\homega{\accentset{\Largehat}{\omega}}
\newcommand\hbfomega{\accentset{\Largehat}{\boldsymbol \omega}}
\newcommand\htau{\accentset{\Largehat}{\tau}}
\newcommand\comega{\accentset{\Largecheck}{\omega}}
\newcommand\bfe{\boldsymbol{e}}
\newcommand\be{\boldsymbol{e}}
\newcommand\rme{{\rm e}}
\newcommand\bfy{{y}}
\newcommand\bfGamma{\boldsymbol{\Gamma}}
\newcommand\bfomega{{\boldsymbol{\omega}}}
\newcommand\bftau{{\boldsymbol{\tau}}}
\newcommand\bftheta{{\boldsymbol{\theta}}}
\newcommand\bfttheta{\tilde{\boldsymbol{\theta}}}
\newcommand\bfOmega{{\boldsymbol{\Omega}}}
\newcommand\bfomegaX{{\boldsymbol{\omega_{\kern -1 pt X}}}}
\newcommand\bfhomega{\boldsymbol{{\hat \omega}}}
\newcommand\bfcomega{\boldsymbol{{\check \omega}}}
\newcommand\Romega{{ \raise 1pt \hbox{$\scriptstyle {\boldsymbol{\omega}}$}}}
\newcommand\Lomega{{ \lower 1pt \hbox{$\scriptstyle {\boldsymbol{\omega}}$}}}
\newcommand\Rsigma{{ \raise 1pt \hbox{$\scriptstyle \sigma$}}}
\newcommand\Reta{{ \raise 1pt \hbox{$\scriptstyle \eta$}}}
\newcommand\vecthsigma{\partial_{{\displaystyle \hat {\raise 1.3pt \hbox{$\scriptstyle \sigma$}}}^a}}
\newcommand\vectcsigma{\partial_{{\displaystyle \check {\raise 1.3pt \hbox{$\scriptstyle \sigma$}}}^\alpha}}
\def\Obj(#1, #2)[#3]#4{\obj(#1, #2)[#3]{#4}}
\def\beginDC#1[#2]{\begindc{#1}[#2]}
\begin{document}

\title{On the Construction of Simply Connected Solvable Lie Groups}

\author{M.E. Fels}
\address{Department of Mathematics and Statistics, Utah State University, Logan Utah, 84322}
\curraddr{Department of Mathematics and Statistics, Utah State University, Logan Utah, 84322}
\email{Mark.Fels@usu.edu}

\subjclass{Primary 22E25; Secondary 58A15, 58J70, 34A26}

\date{\today}


\keywords{Solvable Lie algebras, solvable Lie groups,  Lie's third theorem, first integrals}

\begin{abstract}  Let $\omega_\mathfrak{g}$ be a Lie algebra valued differential $1$-form  on a manifold $M$ satisfying the structure equations $d \omega_\mathfrak{g} + \frac{1}{2} \omega_\mathfrak{g}\wedge \omega_\lieg=0$ where $\mathfrak{g}$ is solvable. We show that the problem of finding a smooth map $\rho:M\to G$, where $G$ is an $n$-dimensional solvable Lie group with Lie algebra $\mathfrak{g}$ and left invariant Maurer-Cartan form $\tau$, such that $\rho^* \tau= \omega_\mathfrak{g}$ can be solved by quadratures and the matrix exponential.  In the process we give a closed form formula for the vector fields in Lie's third theorem for solvable Lie algebras. A further application produces the multiplication map for a simply connected  $n$-dimensional  solvable Lie group using only the matrix exponential and $n$ quadratures. Applications to finding first integrals for completely integrable Pfaffian systems with solvable symmetry algebras are also given.
\end{abstract}

\pagestyle{plain}
\maketitle

\section{Introduction}

Let $ M$ be an $m$-dimensional manifold and let $\omega^i$, $i=1,\ldots,n$ be a collection of  $n$ differential forms
on $M$ satisfying
\begin{equation}
d\omega^i +\frac{1}{2} C^i_{jk} \omega^j \wedge \omega^k =0
\EqTag{FSE}
\end{equation}
where $C^i_{jk}$ are the structure constants of an $n$-dimensional real Lie algebra $\lieg$. Given $x\in M$ there exists an open set $U$ about $x$ and a smooth map $\rho:U \to G$, where $G$ is a Lie group with a basis of left invariant forms $\tau^i$ on $G$ satisfying \EqRef{FSE} on $G$, such that
\begin{equation}
\omega^i = \rho^* \tau^i.
\EqTag{MF1}
\end{equation}
If $M$ is simply connected, then $U$ can be taken to be all of $M$. These facts and many others related to equations \EqRef{FSE} and \EqRef{MF1} are proved in Chapter 3 of \cite{sharpe:1997a}.

Equation \EqRef{FSE} occurs in a number of applications such as the integration of Pfaffian systems of finite type with symmetry (see  \cite{doubrov:2000a}, \cite{fels:2007a} and \cite{lychagin:1991a}), the reconstruction problem for quotients of Pfaffian systems (see \cite{anderson-fels:2005a} and \cite{fels:2007a}), and the method of moving frames \cite{griffiths:1974a}. Equation \EqRef{FSE} also occurs in Lie's third theorem. One version of this theorem (see Section \StRef{SLieT}) states that given a Lie algebra $\lieg$ with structure constants $C^i_{jk}$, there exists $n$ point-wise linearly independent $1$-forms $\omega^i$ on $\real^n$ satisfying equation \EqRef{FSE}.



\smallskip

This article addresses how quadrature and matrix algebra can be used to find the map $\rho:M \to G$ in equation \EqRef{MF1} when $\lieg$ is a solvable Lie algebra. The solution to this problem leads to an algorithm that can be used to explicitly construct simply connected solvable Lie groups from their Lie algebra using matrix exponentiation and quadrature. By quadrature we mean the following (see \cite{lychagin:1991a}). Given a closed differential $1$-form $\omega$ on a simply connected manifold $M$, then $\omega  = df $ for a smooth function $f:M \to \real$ where $f$ can be determined from $\omega$ by integration (quadrature) along a path from a fixed point (the Poincare Lemma for $1$-forms).

Our starting point is Theorem \StRef{reduce} in Section \StRef{Sreduce} which shows that if a Lie algebra $\lieg$ admits a codimension one ideal $\liek \subset \lieg$, then the structure equations \EqRef{FSE} can be reduced to $\liek \times \real$ using one quadrature and the matrix exponential. A simple induction argument can then be applied when $\lieg$ is a solvable Lie algebra which reduces the structure equations \EqRef{FSE}  to $d \tilde \omega^i = 0$ (Corollary \StRef{C2}).

In Section \StRef{Sfi} we show how the results in Section \StRef{Sreduce} can be used to construct first integrals for a completely integrable Pfaffian system with a solvable symmetry algebra using quadrature and matrix exponentiation.

In Section \StRef{SLieT} we reverse the argument from Corollary \StRef{C2} in Section \StRef{Sreduce} which results in a proof of Lie's Third theorem for vector fields by giving an {\it explicit closed form} for the vector fields,  see Theorem \StRef{PLT} and Corollary \StRef{CLV}. In comparison the proof of the vector field version of Lie's third theorem given by Cartan, which is valid for any Lie algebra, uses ODE existence theorems (see  \cite{flanders:1989a}) and does not give closed form formulas.

In Section \StRef{SLie3G} our main goal is to show that the multiplication map $\mu:G \times G\to G$  for any simply connected solvable Lie group $G$ can be constructed by $n$ quadratures and the matrix exponential (Theorem \StRef{L3M}). This is done in 3 steps. 

The first step is given in Section \StRef{Lie3G} where we show that $\real^n$ with 
the solution to Lie's third theorem for vector fields from Section \StRef{SLieT} can be given a global Lie group structure such that the 1-forms $\{\tau^i\}_{1\leq i \leq n}$ in Theorem  \StRef{PLT} are a basis for the left invariant $1$-forms. This provides an alternate proof to the group version of Lie's third theorem for solvable algebras, and an alternate proof that a simply connected solvable Lie group is topologically $\real^n$.

The second step is given in  Section \StRef{movf} where we return to the problem of finding a map $\rho:M \to G$  satisfying \EqRef{MF1} when $G$ is solvable and $M$ is simply connected. Again this can be done using only quadrature and the matrix exponential as shown in Theorem \StRef{CMF}. 

The last step is given in Section \StRef{multmap} where solve the problem of finding the multiplication map $\mu$ for the simply connected solvable Lie group $G$, whose existence in guaranteed from step one. We construct (algebraically)  a set of $n$ $1$-forms $\omega^i$ on $G\times G$ which satisfy equations \EqRef{FSE} with the property that the map $\rho:G\times G \to G$ from Theorem \StRef{CMF} in Section \StRef{movf}  is the multiplication map (Theorem \StRef{L3M}). This gives an algorithm that produces the global multiplication map $\mu:G \times G \to G$ for any $n$-dimensional simply connected solvable Lie group $G$ using $n$ quadratures and the product of matrix exponentials.

The calculations for this paper were performed using the Maple DifferentialGeometry package. 
The current release of the DifferentialGeometry package, the worksheets which
implement Theorems \StRef{PLT}, \StRef{CMF}, \StRef{L3M}, and the examples can be downloaded from ${\bf {\rm http\!:\!/\!/digitalcommons.usu.edu/dg/}}\, $.

\section{The Reduction Theorem}\StTag{Sreduce}

Let $\lieg$ be an $n$-dimensional Lie algebra and suppose that $\liek\subset \lieg$ is a codimension one ideal. With this hypothesis on $\lieg$, Theorem \StRef{reduce} below uses one quadrature and the matrix exponential to modify a set of differential $1$-forms satisfying equation \EqRef{FSE} so that the resulting forms satisfy equation \EqRef{FSE} with the structure constants for the Lie algebra $\liek\times \real$.



Let $\liek \subset \lieg$ denote a codimension 1 ideal in the $n$-dimensional Lie algebra $\lieg$. A basis $\beta=\{ \bfe_1,\ldots , \bfe_{n-1},\bfe_n\}$  for $\lieg$ such that $\hat \beta = \{ \bfe_a \}_{1\leq  a\leq n-1}$ is a basis for $\liek$ is {\bf adapted to the codimension one ideal $\liek\subset \lieg$}. In this case the structure constants are
\begin{equation}
[\bfe_a, \bfe_b ]= C_{ab}^c\bfe_c, \quad [\bfe_n, \bfe_a] = C_{na}^b \bfe_b\quad ,   1\leq a,b,c \leq n.
\EqTag{SEab}
\end{equation}
Note that $C^a_{bc}$ in equation \EqRef{SEab} are the structure constants of the $(n-1)$-dimensional Lie algebra $\liek$ in the basis $\hat \beta$.

This leads to a well-known lemma,  see  \cite{lychagin:1991a}.

\begin{Lemma}  \StTag{WK} Let $\liek \subset \lieg$ be a codimension one ideal and let $\beta=\{ \bfe_a,\bfe_n\}_{1\leq a \leq n}$
be an adapted basis.  Let  $\tilde \omega^i$ be $n$ differential forms  on a manifold $M$ satisfying equation \EqRef{FSE} for the Lie algebra $\lieg$ where the structure constants $\tilde C^i_{jk}$ are given in a basis $\tilde \beta=\{ {\bf f}_i \}_{1\leq i \leq n}$, and let $[P^i_j]$ be the change of basis matrix
$$
{\bf f}_j=P^i_j {\bf e}_i.
$$
Then $ \omega^i=P^i_j \tilde \omega^j$ satisfy the structure equations 
\begin{equation}
\begin{aligned}
 d\omega^a +\frac{1}{2} C^a_{bc} \omega^b \wedge \omega^c + C^a_{nb} \omega^n \wedge \omega^b & = 0\\
 d \omega^n &  = 0  
\end{aligned}
\EqTag{ASE}
\end{equation}
where $C^a_{bc}$ and $C^n_{nb}$ are the structure constants of the Lie algebra $\lieg$ in the basis $\beta$ (equation \EqRef{SEab}).
\end{Lemma}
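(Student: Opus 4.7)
The plan is to prove the lemma in two clean steps: first verify that the structure equations \EqRef{FSE} are covariant under a change of basis of $\lieg$, and second use the ideal hypothesis to read off the special form \EqRef{ASE}.

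First I would show that the forms $\omega^i = P^i_j \tilde\omega^j$ satisfy the Maurer--Cartan-type equation \EqRef{FSE} with structure constants $C^i_{jk}$ of $\lieg$ in the basis $\beta$. Starting from $d\tilde\omega^j + \tfrac{1}{2}\tilde C^j_{kl}\tilde\omega^k\wedge\tilde\omega^l = 0$ and the inverse relation $\tilde\omega^j = (P^{-1})^j_i\omega^i$, a direct computation gives
\begin{equation*}
d\omega^i = P^i_j\, d\tilde\omega^j = -\tfrac{1}{2}\,P^i_j\,\tilde C^j_{kl}\,(P^{-1})^k_m(P^{-1})^l_n\,\omega^m\wedge\omega^n.
\end{equation*}
Since the structure constants transform by $C^i_{mn} = P^i_j\,\tilde C^j_{kl}\,(P^{-1})^k_m(P^{-1})^l_n$ under the change of basis $\bff_j = P^i_j\bfe_i$, this is exactly $d\omega^i + \tfrac{1}{2}C^i_{mn}\omega^m\wedge\omega^n = 0$.

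Next I would use the codimension-one ideal structure to conclude that the basis $\beta$ is adapted, i.e.\ $C^n_{ab} = 0$ and $C^n_{na} = 0$ for $1\le a,b\le n-1$. These vanish because $\liek$ is an ideal: $[\bfe_a,\bfe_b]\in\liek$ and $[\bfe_n,\bfe_a]\in\liek$, so neither bracket has a component along $\bfe_n$. Splitting the sum $\tfrac{1}{2}C^i_{jk}\omega^j\wedge\omega^k$ according to whether each index lies in $\{1,\dots,n-1\}$ or equals $n$, the $i=n$ equation reduces to $d\omega^n = 0$, and the $i=a$ equation reduces to the first line of \EqRef{ASE} with the $\tfrac{1}{2}C^a_{bc}\omega^b\wedge\omega^c$ contribution from bracketings within $\liek$ and the $C^a_{nb}\omega^n\wedge\omega^b$ contribution from the adjoint action of $\bfe_n$ on $\liek$.

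There is really no hard step here; the only thing to be careful about is bookkeeping with indices in the transformation law for structure constants and making sure the factor of $\tfrac{1}{2}$ in the antisymmetric sum is handled correctly when the index $n$ is singled out (so that $C^a_{nb}\omega^n\wedge\omega^b$ appears without the $\tfrac{1}{2}$, absorbing both the $(n,b)$ and $(b,n)$ contributions from the symmetrized sum).
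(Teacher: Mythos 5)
Your proof is correct, and since the paper states Lemma \StRef{WK} without proof (deferring to it as ``a simple exercise''), your argument is precisely the intended one: covariance of \EqRef{FSE} under the linear change of coframe via the transformation law $C^i_{mn}=P^i_j\,\tilde C^j_{kl}\,(P^{-1})^k_m(P^{-1})^l_n$, followed by the vanishing $C^n_{ab}=C^n_{na}=0$ forced by $\liek$ being an ideal. The index bookkeeping is handled correctly, including the point that the two terms $\tfrac12 C^a_{nb}\,\omega^n\wedge\omega^b$ and $\tfrac12 C^a_{bn}\,\omega^b\wedge\omega^n$ combine to give $C^a_{nb}\,\omega^n\wedge\omega^b$ without the factor of $\tfrac12$.
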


\medskip
Given differential $1$-forms $\omega^i$ satisfying the structure equation \EqRef{FSE} where $\lieg$ admits a codimension $1$ ideal,  Lemma \StRef{WK} shows that by a constant linear change in the forms $\omega^i$, the resulting forms can be made to satisfy equations \EqRef{ASE}. Equation \EqRef{ASE} is then the starting point to the following reduction theorem which is fundamental in this article.

\begin{Theorem} \StTag{reduce} Suppose $\lieg$ is an $n$-dimensional Lie algebra with $\liek \subset \lieg$ a codimension one ideal and
let $\beta=\{\bfe_a, \bfe_n\}_{1\leq a \leq n-1}$ be a basis adapted to the codimension one ideal.   Let  $\bfomega=[ \omega^1,\ldots, \omega^n]^T$ be  an $n$-vector of differential $1$-forms  on a simply connected manifold $M$ satisfying \EqRef{ASE} where $C^i_{jk}$ are the structure constants of $\lieg$ in the basis $\beta$. 

Let $f:M \to \real$ so that $\omega^n $ in \EqRef{ASE} satisfies 
\begin{equation}
\omega^n= df,
\EqTag{int}
\end{equation}
and  let $\bfhomega$ be the $n$-vector of $1$-forms on $M$ defined by
\begin{equation}
\bfhomega = \rme^{f [\ad ({\bf e}_n)]}\cdot \bfomega,
\EqTag{homega}
\end{equation}
where $[\ad ({\bf e}_n)] $ is the $n$ by $n$ matrix representation of $\ad ({\bf e}_n)$ in the basis $\beta$. Then the structure equations for $\bfhomega$ are
\begin{equation}
d \homega^a + \frac{1}{2} C^a_{bc} \homega^b \wedge \homega^c =0\, , \quad d \homega^n  = 0
\EqTag{FSE3}
\end{equation}
where $C^a_{bc}$ are the structure constants of the Lie algebra $\liek$ in the basis $\hat \beta = \{ {\bf e}_a\}_{1\leq a\leq n-1}$.
\end{Theorem}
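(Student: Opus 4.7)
The plan is to exploit the block structure of $[\ad(\bfe_n)]$ in the adapted basis $\beta$. Because $\liek$ is an ideal we have $[\bfe_n,\bfe_a] = C^b_{na}\bfe_b$ with $b$ running only over $1,\ldots,n-1$, and $[\bfe_n,\bfe_n]=0$; hence both the last row and the last column of the matrix $[\ad(\bfe_n)]$ vanish. Thus $[\ad(\bfe_n)]$ is block-diagonal with upper-left $(n-1)\times(n-1)$ block $A^a_b = C^a_{nb}$ and zero $(n,n)$-entry, and $e^{f[\ad(\bfe_n)]}$ has the same block form with upper block $E = e^{fA}$ and $(n,n)$-entry equal to $1$. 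In particular $\homega^n = \omega^n = df$, so $d\homega^n = 0$ is immediate.

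For the remaining equations I would compute $d\homega^a$ directly. Writing $\homega^a = E^a_b \omega^b$ for $a,b \leq n-1$ and using that $A$ commutes with $E = e^{fA}$, one has $dE = E A\,df = E A \,\omega^n$. Therefore
\begin{equation*}
d\homega^a = (dE)^a_b \wedge \omega^b + E^a_b\, d\omega^b = E^a_c A^c_b \,\omega^n\wedge \omega^b + E^a_b\!\left(-\tfrac{1}{2}C^b_{cd}\,\omega^c\wedge\omega^d - C^b_{nc}\,\omega^n\wedge\omega^c\right).
\end{equation*}
Since $A^c_b = C^c_{nb}$, the two $\omega^n \wedge \omega^\bullet$ contributions cancel, leaving $d\homega^a = -\tfrac{1}{2}\, E^a_d\, C^d_{bc}\, \omega^b\wedge \omega^c$.

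The only substantive content is then matching this against the desired right-hand side $-\tfrac12\, C^a_{bc}\,\homega^b\wedge\homega^c = -\tfrac12\, C^a_{bc}\, E^b_d E^c_e\, \omega^d\wedge\omega^e$. This reduces to the identity $E^a_d C^d_{bc} = C^a_{de}\, E^d_b E^e_c$, i.e.\ the statement that the matrix $E$ is a Lie-algebra automorphism of $\liek$. This follows from the standard fact that the exponential of a derivation is an automorphism, once we observe that $A = \ad(\bfe_n)|_\liek$ really is a derivation of $\liek$: the Jacobi identity always makes $\ad(\bfe_n)$ a derivation of $\lieg$, and because $\liek$ is an ideal this derivation restricts to $\liek$.

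I do not foresee a serious obstacle. Once the block form of $[\ad(\bfe_n)]$ is in hand, the proof is a short computation whose only real ingredient is the automorphism identity; the rest amounts to careful index bookkeeping and the commutativity $[A,E]=0$ that allows $dE$ to be written as $E A\,\omega^n$.
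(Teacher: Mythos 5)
Your proposal is correct and follows essentially the same route as the paper's proof: block-diagonal form of $[\ad(\bfe_n)]$ in the adapted basis, the identity $dE = EA\,\omega^n$, cancellation of the $\omega^n\wedge\omega^b$ terms against the $C^b_{nc}$ terms of the structure equations, and the fact that $\rme^{fA}$ is an automorphism of $\liek$ to convert $E^a_d C^d_{bc}\,\omega^b\wedge\omega^c$ into $C^a_{bc}\,\homega^b\wedge\homega^c$. Your writeup is if anything slightly more explicit than the paper's in spelling out the automorphism identity at the final step.
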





\begin{proof}   The matrix representation of the derivation $\ad({\bf e}_n):\lieg \to \lieg$  is computed in the adapted basis from equation \EqRef{SEab} to be
\begin{equation}
\ad( {\bf e}_n) (\be_b)= [\be_n, \be_b] =C_{nb}^a \be_a, \quad \ad( {\bf e}_n) (\be_n) =[\be_n,\be_n]= 0.
\EqTag{aden}
\end{equation}
Let $\widetilde \ad(\be_n) : \liek \to \liek $ be the restriction of $\ad({\bf e}_n)$ to the invariant subspace $\liek$. The $n-1\times n-1$ matrix representation $[\widetilde \ad(\be_n)]$ of $\widetilde \ad(\be_n) $ in the basis $\beta$ is computed from
 \EqRef{aden} to be
\begin{equation}
[\widetilde \ad(\be_n)]^a_b= C^a_{nb}.
\EqTag{alpc}
\end{equation}
The $n\times n$ matrix valued function $ \rme^{ f\, [\,\ad({\bf e}_n)\, ]} $ in $M$ in equation \EqRef{homega} can then be written,
\begin{equation}
\rme^{ f\,[\, \ad({\bf e}_n)\, ]}  =  \left[ \begin{array}{cc} \rme^{ f\, [\, \widetilde \ad(\be_n)\, ]  }  & {\bf 0} \\ {\bf 0}^T & 1 \end{array} \right]
\EqTag{Ainbas}
\end{equation}
where ${\bf 0}$ is the $n-1$ zero vector and ${\bf 0}^T$ is its transpose. The forms $\bfhomega$ in equation \EqRef{homega} can then be written using equation \EqRef{Ainbas} (and matrix vector multiplication) as
\begin{equation}
[\homega^a] = \rme^{ f \,[\, \widetilde \ad(\be_n)\,] } [\omega^b]  , \quad \homega^n = \omega^n .
\EqTag{homega2}
\end{equation}

The verification of equation \EqRef{FSE3} in Theorem \StRef{reduce} can now be checked directly by taking the exterior derivative of the forms $\homega^a$ in equation \EqRef{homega2}. In order to do so we need the exterior derivative of $\rme^{ f\, [\, \widetilde \ad(\be_n)\, ]}$  from equation \EqRef{alpc} is
\begin{equation}
\begin{aligned}
d \, ( \ \rme^{ f\, [\, \widetilde \ad(\be_n)\, ]}\, )   = df \,  \rme^{ f\, [\, \widetilde \ad(\be_n)\, ]} [\, \widetilde \ad(\be_n)\, ].
\EqTag{dA} 
\end{aligned}
\end{equation}
We now compute $d \homega^a$ using equations \EqRef{dA}, \EqRef{ASE}, \EqRef{alpc} and \EqRef{int},
\begin{equation}
\begin{aligned}
& &[ d  \homega^a ]  &=   \rme^{ f\, [\, \widetilde \ad(\be_n)\, ]} [\, \widetilde \ad(\be_n)\, ][-\omega^b\wedge df]+ \rme^{ f\, [\, \widetilde \ad(\be_n)\, ]}[ d \omega^b]\\
& & & =   \rme^{ f\, [\, \widetilde \ad(\be_n)\, ]}[ - C^c_{nb} \omega^b \wedge df]  - \rme^{ f\, [\, \widetilde \ad(\be_n)\, ]}[ \frac{1}{2} C^b_{cd} \omega^c \wedge \omega^d + C^b_{cn}\omega ^c \wedge  \omega^n]\\
& & &=   \rme^{ f\, [\, \widetilde \ad(\be_n)\, ]}[- C^c_{nb}  \omega^b\wedge \omega^n] -  \rme^{ f\, [\, \widetilde \ad(\be_n)\, ]} 
[ \frac{1}{2} C^b_{cd} \omega^c \wedge \omega^d + C^b_{cn}\omega ^c \wedge  \omega^n] \\
& & & =  \rme^{ f\, [\, \widetilde \ad(\be_n)\, ]}[- 
 \frac{1}{2} C^b_{cd} \omega^c \wedge \omega^d ]\\
& && =[-\frac{1}{2} C^a_{cd}  \homega^c \wedge \homega^d],
\end{aligned}
\EqTag{bigd}
\end{equation}
where the last line follows because $\rme^{ f\, [\, \widetilde \ad(\be_n)\, ]} $ is the matrix representation of an automorphism of $\liek$ for each $x\in M$. 
\end{proof}

The computation leading to Theorem \StRef{reduce} uses one quadrature in equation \EqRef{int} and the matrix exponential \EqRef{homega}. We also note that the change of forms in equation \EqRef{homega} results in the elimination of the third term  $C^a_{nb}$ in the first structure equation in \EqRef{ASE}.  Since the constants  $C^a_{bc}$ in equation \EqRef{FSE3} are the structure constants for $\liek$ in the basis $\{ \bfe_a \}_{1\leq  a\leq n-1}$, equation \EqRef{homega} can then be thought of reducing the Lie algebra $\lieg$ to $\liek\times \real$.  See also Remark \StRef{BFR} below.

We now apply Theorem \StRef{reduce} to the case where $\lieg$ admits a sequence of subalgebras $\liek_s\subset \lieg$, $s=0,\ldots,r$, satisfying
\begin{equation}
\liek_r \subset \liek_{r-1} \subset \ldots
\liek_1 \subset \liek_0= \lieg 
\EqTag{sid}
\end{equation}
where each $\liek_s \subset \liek_{s-1}$, $1\leq s \leq r$ is a codimension 1 ideal. We call the sequence in equation \EqRef{sid} {\bf a sequence of codimension one ideals}. Note that $\dim \liek_s =\dim \lieg -s =n-s$. A basis $\beta= \{ \bfe_i \}_{1\leq i \leq n} $ for $\lieg$ is said to be {\bf adapted to a sequence of codimension one ideals} $\liek_s \subset \liek_{s-1} , s=1,\ldots,r$, if 
\begin{equation}
\liek_s =\spn \{\bfe_1,\ldots, \bfe_{n-s} \}, \quad s=0\ldots r.
\EqTag{Abas}
\end{equation}
Given the basis $\beta= \{ \bfe_i \}_{1\leq i \leq n} $ adapted to the sequence of codimension one ideals $\liek_s\subset \liek_{s-1},s=1,\ldots,n$, we  let $\ad_{s}( \bfe_{n-s}) \in {\rm Der}(\liek_s)$ be the restriction of $\ad(\bfe_{n-s})$ to the invariant subspace $\liek_s$,
\begin{equation}
\ad_{s}( \bfe_{n-s}) = \ad( \bfe_{n-s}) |_{\liek_s}, \quad 0\leq s\leq r-1.
\EqTag{alseq}
\end{equation}
In particular $\ad_0(\bfe_n) = \ad(\bfe_n)$.

Theorem \StRef{reduce} easily extends to the case of a sequence of codimension one ideals giving the following.

\begin{Corollary} \StTag{C1} Suppose the $n$-dimensional Lie algebra $\lieg$ in Theorem \StRef{reduce} admits a sequence of codimension one ideals $\liek_s \subset \liek_{s-1},\ s=1,\ldots,r $, and let $\beta= \{ \bfe_i \}_{1\leq i \leq n}$ be a basis adapted to the sequence. Let $\omega^i$ be $n$ differential $1$-forms on a simply connected manifold $M$ satisfying \EqRef{FSE} where the structure constants are computed in the  basis $\beta$.
Then there exists $r$ functions $ f^t:M \to \real, \ t= n-r+1,\ldots, \leq n $ and $n-r$ differential $1$-forms $\{ \comega^\alpha \}_{1\leq \alpha \leq n-r}$ obtained inductively from $\omega^i$ by $r$ quadratures and the exponential of matrices such that

\begin{equation}
\left[ \begin{array}{cc}  \rme^{f^{n-r+1}[\ad_{r-1}( \bfe_{n-r+1})]} & {\bf 0}_{r-1} \\ {\bf 0}_{r-1}^T & {\rm I}_{r-1} \end{array} \right]\cdots\left[ \begin{array}{cc}  \rme^{f^{n-1}[\ad_{1}( \bfe_{n-1})]} & {\bf 0}_1 \\ {\bf 0}_1 ^T & 1 \end{array} \right] \, \rme^ {f^n[\ad(\be_n)]} \, 
 \left[ \begin{array}{c} \omega^1\\ \vdots \\ \omega^n  \end{array} \right]
= \left[ \begin{array}{c} \comega^1\\ \vdots \\ \comega^{n-r} \\ d  f ^{n-r+1} \\ \vdots \\ df^n  \end{array} \right]
\EqTag{seqreduce}
\end{equation}
where $[\ad_{s}( \bfe_{n-s})]$ is the $n-s$ by $n-s$ matrix representation of $\ad_{s}( \bfe_{n-s}) \in {\der}(\liek_s)$ defined in \EqRef{alseq} in the basis \EqRef{Abas} for $\liek_{s}$, ${\rm I}_s $ is the $s$ by $s$ identity matrix, and ${\bf 0}_s$ is the $n-s$ by $s$ zero matrix.

Furthermore the $n-r$ differential $1$-forms $\comega^\alpha$ satisfy
\begin{equation}
d \comega^\alpha +\frac{1}{2}C^\alpha_{\beta \gamma} \comega^\beta \wedge \comega^\gamma = 0 
\EqTag{redste}
\end{equation}
where $C^\alpha_{\beta \gamma}$ are the structure constants for the $(n-r)$-dimensional Lie algebra $\liek_r$ in the basis $\check \beta=\{ \bfe_\alpha\}_{1\leq \alpha \leq n-r}$.
\end{Corollary}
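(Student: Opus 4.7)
The plan is to prove Corollary \StRef{C1} by induction on $r$, the length of the sequence of codimension one ideals, applying Theorem \StRef{reduce} at each stage. The base case $r=1$ is essentially Theorem \StRef{reduce} itself: after invoking Lemma \StRef{WK} (if necessary) to put the $\omega^i$ into the adapted form \EqRef{ASE} for the pair $\liek_1\subset\lieg$, the closedness of $\omega^n$ and the simple connectedness of $M$ yield $f^n$ with $\omega^n=df^n$, and forming $\bfhomega=\rme^{f^n[\ad(\bfe_n)]}\bfomega$ produces $n-1$ forms satisfying the structure equations of $\liek_1$ in the basis $\{\bfe_1,\ldots,\bfe_{n-1}\}$, while the bottom-right $1$ in the block decomposition \EqRef{Ainbas} keeps $\homega^n=df^n$ unchanged.

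For the inductive step from $s$ to $s+1$, I would assume that applying the first $s$ matrices from the right of \EqRef{seqreduce} has produced an $n$-vector of one-forms whose last $s$ components are $df^{n-s+1},\ldots,df^n$ and whose first $n-s$ components $\widetilde\omega^1,\ldots,\widetilde\omega^{n-s}$ satisfy the structure equations of $\liek_s$ in the basis $\{\bfe_1,\ldots,\bfe_{n-s}\}$. Since $\beta$ is adapted to the entire sequence, the restriction $\{\bfe_1,\ldots,\bfe_{n-s}\}$ is automatically adapted to the codimension one ideal $\liek_{s+1}\subset\liek_s$, so these structure equations are already in the form \EqRef{ASE} for the pair $\liek_{s+1}\subset\liek_s$ without any further change of basis. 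In particular $d\widetilde\omega^{n-s}=0$, so a single quadrature produces $f^{n-s}$ with $\widetilde\omega^{n-s}=df^{n-s}$. Applying Theorem \StRef{reduce} to $\liek_s$ with ideal $\liek_{s+1}$, the $(n-s)\times(n-s)$ matrix $\rme^{f^{n-s}[\ad_s(\bfe_{n-s})]}$ acts on the top $n-s$ components and yields the required one-forms for $\liek_{s+1}$ together with a new $df^{n-s}$ at position $n-s$.

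Padding this $(n-s)\times(n-s)$ matrix with the $s\times s$ identity block in the lower right (and zero off-diagonal blocks) exactly produces the next matrix factor appearing in \EqRef{seqreduce}, and by construction it fixes the previously exact entries $df^{n-s+1},\ldots,df^n$. Iterating $r$ times and relabelling the surviving $n-r$ one-forms as $\comega^1,\ldots,\comega^{n-r}$ delivers both the displayed formula \EqRef{seqreduce} and the reduced structure equations \EqRef{redste} of $\liek_r$ in the basis $\check\beta=\{\bfe_\alpha\}_{1\leq\alpha\leq n-r}$. The only point requiring any real care is the compatibility of the identity-padding with the inductive hypothesis, but this follows immediately from the block form \EqRef{Ainbas} derived in the proof of Theorem \StRef{reduce}, together with the observation that adaptedness of $\beta$ to the whole flag of ideals makes Theorem \StRef{reduce} directly applicable at every stage; there is no genuine obstacle beyond bookkeeping.
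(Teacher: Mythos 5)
Your proposal is correct and follows essentially the same route as the paper: the paper's own proof is exactly this induction, applying Theorem \StRef{reduce} at each stage, using the adaptedness of $\beta$ to the whole flag to conclude that the last surviving form is closed (so one quadrature yields the next $f^t$), and padding each successive exponential with an identity block so that the previously produced exact forms are fixed. No substantive difference in approach or in the points requiring care.
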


\begin{proof} Using Theorem \StRef{reduce} we inductively compute the functions $f^t$, $n-r+1\leq t \leq n$. First we let $f^n:M \to \real$ be the function obtained in Theorem \StRef{reduce} using $\liek=\liek_1$ so that $ \omega^n=df^n$. The first term in equation \EqRef{seqreduce} is
\begin{equation}
 \rme^{ f^n [\, \ad({\bf e}_n)\,]  } \left[ \begin{array}{c} \omega^1\\ \vdots \\ \omega^n \end{array} \right].
 = \left[ \begin{array}{c} \homega^1\\ \vdots \\ \homega^{n-1} \\ df^n  \end{array} \right].
\EqTag{CBn0}
\end{equation}

Now because $\liek_{2} \subset \liek_{1}$ is a codimension one ideal, and the basis $\{\bfe_i\}_{1\leq i\leq n}$ is adapted to the sequence \EqRef{sid}, we have
$$
d \homega^{n-1} = 0.
$$
Again letting $f^{n-1} :M \to \real$ so that $\homega^{n-1} = d f^{n-1}$, and applying Theorem \StRef{reduce} to the case of $\liek_2 \subset \liek_1$ we get 
\begin{equation}
\rme^{ f^{n-1} [\, \ad_1(\be_{n-1})\, ] }  
\left[ \begin{array}{c} \homega^1 \\  \vdots \\ \homega^{n-1} \end{array} \right]=
 \left[ \begin{array}{c} \tomega^1 \\ \vdots \\ \tomega^{n-2} \\ df^{n-1}  \end{array} \right]
\EqTag{CBn1}
\end{equation}
where the $n-2$ differential forms $\tomega^u$ satisfy 
$$
d \tilde \omega^u +\frac{1}{2}   C^u_{vw} \tilde \omega^v \wedge \tilde \omega^w = 0
$$
where $C^u_{vw}$ are the structure constants for $\liek_2$ in the basis $\tilde \beta=\{\be_u\}_{1\leq u \leq n-2}$. 
Using equation \EqRef{CBn1} and equation \EqRef{CBn0} produces the right most two matrices on the left hand side of equation \EqRef{seqreduce}.

Continuing by induction this produces, by $n-r$ quadratures, the $n-r$ functions $f^t:M \to \real$ and the matrices in equation \EqRef{seqreduce}. Furthermore the forms $\bfcomega$ in equation \EqRef{seqreduce} satisfy the structure equation \EqRef{redste}. \end{proof}

If $\lieg$ is an $n$-dimensional solvable Lie algebra it is well known that $\lieg$ admits a sequence of codimension $1$ ideals $\liek_s\subset \liek_{s-1}, \ s= 1,\ldots,n$, \cite{jacobson:1962a}, and these can be found using elementary linear algebra. A basis adapted to the derived series can easily be used to produce a basis adapted to a sequence of codimension $1$ ideals. Note that $\dim \liek_n =0$.  This leads to the next corollary.

\begin{Corollary} \StTag{C2} Let $\lieg$ be an $n$-dimensional solvable  Lie algebra, and let $\beta=\{ \bfe_i\}_{1\leq i \leq n}$ be a basis adapted to a sequence of codimension one ideals $\liek_s\subset \liek_{s-1}$, $s=1,\ldots,n$. Let $\omega^i$ be $n$ differential $1$-forms on a simply connected manifold $M$ satisfying equation \EqRef{FSE} where the structure constants are computed in the basis $\beta$. 

There exists $n$ functions $f^i:M \to \real$, $i=1,\ldots,n$, obtained inductively from the forms $\omega^i$ using $n$ quadratures and the exponential of matrices so that
\begin{equation}
\left[ \begin{array}{cc}  \rme^{f^{2}[\, \ad_{n-2}(\be_2)\,]} & {\bf 0}_{n-2} \\ {\bf 0}^T_{n-2} & {\rm I}_{n-2} \end{array} \right] \cdots 
\left[ \begin{array}{cc}  \rme^{f^{n-1}[\ad_{1}( \bfe_{n-1})]} & {\bf 0}_{1} \\ {\bf 0}_{1} ^T & 1 \end{array} \right] \, \rme^ {f^n[\ad(\be_n)]} \, 
 \left[ \begin{array}{c} \omega^1\\ \vdots \\ \omega^n  \end{array} \right]
  = \left[ \begin{array}{c}  d  f^1\\ \vdots \\df^n \end{array} \right],
\EqTag{FIE}
\end{equation}
where $[\ad_{s}( \bfe_{n-s})]$ is the $n-s$ by $n-s$ matrix representation of $\ad_{s}( \bfe_{n-s}) \in {\der}(\liek_s)$ defined in \EqRef{alseq} in the basis \EqRef{Abas} for $\liek_{s}$ where $ s=0,\ldots,n-2$, ${\rm I}_s $ is the $s$ by $s$ identity matrix, and ${\bf 0}_s$ is the $n-s$ by $s$ zero matrix.
\end{Corollary}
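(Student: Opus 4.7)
The plan is to derive Corollary \StRef{C2} as a direct consequence of Corollary \StRef{C1} together with the classical fact (from Jacobson \cite{jacobson:1962a}) that every solvable Lie algebra admits a full flag $\liek_0 = \lieg \supset \liek_1 \supset \cdots \supset \liek_{n-1} \supset \liek_n = \{0\}$ of codimension one ideals, obtainable constructively by choosing a basis adapted to (a refinement of) the derived series. So the very first step is simply to fix such a flag and an adapted basis $\beta = \{\bfe_i\}_{1 \le i \le n}$ in the sense of equation \EqRef{Abas}.

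Next I would apply Corollary \StRef{C1} with $r = n-1$. This consumes $n-1$ quadratures, produces $n-1$ functions $f^n, f^{n-1}, \ldots, f^2$ and the string of matrix exponentials $\rme^{f^n[\ad(\be_n)]}, \rme^{f^{n-1}[\ad_1(\be_{n-1})]}, \ldots, \rme^{f^2[\ad_{n-2}(\be_2)]}$ that appear on the left-hand side of \EqRef{FIE}, and leaves a single one-form $\comega^1$ satisfying the structure equations \EqRef{redste} for the Lie algebra $\liek_{n-1}$.

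The punch-line is the observation that $\dim \liek_{n-1} = n-(n-1)=1$, so $\liek_{n-1}$ is necessarily abelian and its only structure constant $C^1_{11}$ vanishes. Equation \EqRef{redste} therefore collapses to the single closed-form condition $d\comega^1 = 0$. Since $M$ is simply connected, one final quadrature (the homotopy operator for closed one-forms) produces a smooth $f^1 : M \to \real$ with $\comega^1 = df^1$. Re-writing the output of Corollary \StRef{C1} with this last substitution in the top slot yields the right-hand column $[df^1, \ldots, df^n]^T$ of equation \EqRef{FIE}, at a total cost of $n$ quadratures and $n-1$ matrix exponentials, exactly as claimed.

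The work is essentially all bookkeeping: there is no new analytic content beyond Corollary \StRef{C1}. The only point that needs a moment's care is that the simple connectedness hypothesis is used twice — once inside Corollary \StRef{C1} (inherited from the assumption in Theorem \StRef{reduce}, which is applied iteratively there) and once at the very end to integrate the closed form $\comega^1$. If anything is likely to be the main obstacle, it is verifying that the index ranges in the matrix factors in \EqRef{FIE} line up correctly with the adapted bases for the shrinking ideals $\liek_s$, i.e. that $[\ad_s(\bfe_{n-s})]$ really is the $(n-s)\times(n-s)$ block dictated by \EqRef{alseq} relative to the basis \EqRef{Abas}; but this is already handled by the matching conventions established in Corollary \StRef{C1}, so the induction transfers without modification.
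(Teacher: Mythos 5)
Your proposal is correct and is essentially the argument the paper intends: the paper states Corollary \StRef{C2} without a separate proof, treating it as the specialization of Corollary \StRef{C1} to the full flag of codimension-one ideals that every solvable Lie algebra admits, and your observation that $\liek_{n-1}$ is one-dimensional (hence abelian, so $\comega^1$ is closed and one last quadrature yields $f^1$) is exactly the step that closes the induction and explains why the matrix string in \EqRef{FIE} terminates at $\rme^{f^2[\ad_{n-2}(\be_2)]}$. The bookkeeping of the index ranges and the implicit use of simple connectedness are handled just as you describe.
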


\begin{Remark} The function $f:M \to \real$ in Theorem \StRef{reduce} is uniquely determined if the value of $f$ is prescribed at a point. Similarly for the functions in Corollaries \StRef{C1} and \StRef{C2}. 
\end{Remark}

\begin{Remark} \StTag{BFR} An alternative description of Theorem \StRef{reduce} is the following. Given differential $1$-forms $\{ \omega^i \}_{ 1\leq i \leq n}$ on a manifold $M$ satisfying \EqRef{FSE} and a basis $\beta=\{ \be_i\}_{1\leq i \leq n}$ for the Lie algebra $\lieg$,  define the $\lieg$ valued differential $1$-form $\bfomega_{\lieg}$ by
$$
\bfomega_\lieg = \omega^i \otimes \be_i .
$$
Equation \EqRef{FSE} is then written
\begin{equation}
d \bfomega_\lieg + \frac{1}{2} \bfomega_\lieg \wedge \bfomega_\lieg =0
\EqTag{FSE2}
\end{equation}
where the wedge product $ \bfomega_\lieg \wedge_\lieg \bfomega_\lieg $ is defined by
\begin{equation}
\bfomega_\lieg\wedge \bfomega_\lieg (X,Y) = [\bfomega_\lieg(X), \bfomega_\lieg(Y)]_{\lieg} \qquad {\rm for \ all \ }  X,Y \in TM.
\EqTag{LBE}
\end{equation}

Suppose now that  $\liek\subset \lieg$ is a codimension one ideal and that $\bfomega_\lieg$ is a Lie algebra valued form on a simply connected manifold $M$ satisfying the structure equations $d\bfomega_\lieg+\frac{1}{2} \bfomega_\lieg\wedge \bfomega _\lieg=0$.
Let $A :M \to {\rm Aut}(\lieg)$ be $ A(x) = \exp\left(f(x) \ad({\bf e}_n)\right)$ where $f:M \to \real$ is
defined in \EqRef{int}.  Theorem \StRef{reduce} shows 
 that  $\hbfomega_{\liek \times \real }= A\circ \omega $ takes values in the Lie algebra $\liek \times \real$ and satisfies
$$
d \hbfomega_{\liek \times \real} + \frac{1}{2} \hbfomega_{\liek \times \real} \wedge \hbfomega_{\liek \times \real} =0.
$$
\end{Remark}

\section{First integrals for completely integrable Pfaffian systems with a solvable symmetry algebra} \StTag{Sfi}

Let $I \subset T^*M$ be a rank $n$ Pfaffian system. A function $f:M\to \real$ such that $df(x)\in I_x$ for all $x\in M$ is called a {\bf first integral} of $I$, see \cite{lychagin:1991a} and \cite{doubrov:2000a}. If $f_i:M \to \real$ are $n$ first integrals satisfying $I=\spn{ df_i}$ then the function $f_i$ are a complete set of first integrals.

Suppose  $I= \spn \{\omega^i\}_{1\leq i \leq n}$ where  $\omega^i$ are differential $1$-forms which satisfy the structure equations \EqRef{FSE}. If $\lieg$ satisfies the conditions in Theorem \StRef{reduce} or its corollaries, we show how first integrals of $I$ can be computed using only quadrature and matrix exponentiation.  In particular Theorem \StRef{reduce} states that if $\liek \subset \lieg$ is a co-dimension ideal, then one quadrature produces the first integral $f$ in equation \EqRef{int}.  Corollary \StRef{C2} implies the following.

\begin{Corollary} \StTag{S2C1} Let $I\subset T^*M$ be a completely integrable Pfaffian system on a simply connected manifold $M$. Suppose  $I= \spn \{\omega^i\}_{1\leq i \leq n}$ where the differential $1$-forms $\omega^i$ satisfy
$$
d\omega^i + \frac{1}{2} C^i_{jk} \omega^j \wedge \omega^k = 0,
$$
and $C^i_{jk}$ are the structure constants of a solvable Lie algebra $\lieg$. The $n$ functions $f^i : M \to \real$ constructed sequentially by quadratures and matrix exponentials through formula \EqRef{FIE} are a complete set of first integrals for $I$.
\end{Corollary}

We now show how Corollary \StRef{S2C1} is used in practice, see also \cite{lychagin:1991a}. Let $\Gamma=\spn \{X_i\}_{1\leq i \leq n} $ be an $n$-dimensional Lie algebra of vector fields on $M$ which are infinitesmimal symmetries of a completely integrable rank $n$ Pfaffian system $I=\spn \{ \theta^j\}_{1\leq j \leq n} $. Assume
 $\bfGamma_p=\spn\{X_i(p)\}$ is a $n$-dimensional subspace of $T_pM$ for each $p\in M$ which satisfies the transversality condition 
\begin{equation}
 \bfGamma_p \cap \ann(I_p) = 0, \qquad {\rm for \ all } \ p \in M.
\EqTag{tc}
\end{equation}
The transversality condition \EqRef{tc} implies that $n\times n$ matrix $(P^i_j)= (\theta^i(X_j))$ is invertible, and a simple
computation (see \cite{lychagin:1991a} or \cite{fels:2007a}) shows that the differential forms
\begin{equation}
\omega^i = (P^{-1})^i_j \theta^j
\EqTag{domega}
\end{equation} 
satisfy $I = \spn\{ \omega^i \}_{1\leq i \leq n}$ and
$$
d \omega^i +\frac{1}{2} C^i_{jk} \omega^j \wedge \omega^k  = 0
$$
where $[X_j,X_k]= C^i_{jk} X_i $.  If the Lie algebra $\Gamma$ is solvable then Corollary \StRef{C2} may be used to produce $n$ first integrals for $I$ using quadratures and matrix exponentiation. This is similar to Theorem 3 on page 39 in \cite{lychagin:1991a} but with the important distinction that Theorem 3 of \cite{lychagin:1991a} requires solving functional equations while Corollary \StRef{C2} does not.

\begin{Example}\StTag{Ex1}
Consider the ODE  from pg. 152 in \cite{olver:1998a},
\begin{equation}
u_{xxx}=\frac{3u_{xx}^2}{u_x}+\frac{u_{xx}^3}{u_x^5} . 
\EqTag{621}
\end{equation} 
On the  manifold $ {M} = \{ (x,u,u_x,u_{xx}) \in \real^4 \ | \ u_x \neq 0 \
\} $, equation \EqRef{621} defines the Pfaffian system $I=\spn\{ \theta^1,\theta^2,\theta^3\}$ where
\begin{equation}
 \theta^1= du -u_x dx,\quad  \theta^2=du_x -u_{xx}dx,\quad \theta^3=
 du_{xxx} - \left( 3u_{xx}^2u_x^{-1}+u_{xx}^3u_x^{-5}\right) dx.
 \EqTag{PS3}
\end{equation}
The Pfaffian system \EqRef{PS3} admits the infinitesimal symmetries,
\begin{equation}
Z_1  = \partial_x, \ Z_2 = \partial_u ,\ Z_3 = u\partial_x
-u_x^2\partial_{u_x} -3u_xu_{xx}\partial_{u_{xx}} \ 
\EqTag{SVF}
\end{equation}
and the only non-vanishing bracket is
\begin{equation}
[Z_2, Z_3]= Z_1.
\EqTag{MCE}
\end{equation}
Let $\{ \, \be_1,\, \be_2,\, \be_3\, \}$ be a basis for the $3$-dimensional Lie algebra isomorphic to the
Lie algebra of vector fields $\{ Z_1,Z_2,Z_3 \}$ so that $[\be_2,\be_3]=\be_1$.  We have the sequence of codimension one ideals $\spn\{\be_1\} \subset \spn\{\be_1, \be_2\} \subset \lieg$.

On the set ${ M}^0=(x,u,u_x\neq 0,u_{xx}\neq 0)$
the forms  $ \{\omega^i\}_{1\leq i\leq 3} $ on ${ M}^0$ are computed from equation \EqRef{domega} using equations \EqRef{PS3} and  \EqRef{SVF} to be
\begin{equation}
\begin{split} \omega^1 &=
dx+\frac{1}{(u_xu_{xx})^2}(3u_x^6+3u_{xx}uu_x^4+u_{xx}^2u)du_x-\frac{u_x^3}{u_{xx}^3}(u_x^2+u_{xx}u)du_{xx}
, \\ \omega^2 & = du+\frac{3
u_x^5}{u_{xx}^2}du_x-\frac{u_x^6}{u_{xx}^3}du_{xx}  ,\quad  \omega^3 =
-\frac{(3u_x^4+u_{xx})}{u_{xx}u_x^2}du_x+\frac{u_x^3}{u_{xx}^2}du_{xx}\
. \end{split}
\EqTag{omex1}
\end{equation}

Now $ d\omega^3 =0$ and we may write $\omega^3 = df^3$ where
\begin{equation}
f^3 =   \frac{1}{u_x}-\frac{u_x^3}{u_{xx}} .
\EqTag{om3}
\end{equation}
in equation \EqRef{FIE}. Then by exponentiating
\begin{equation}
[\, \ad(\be_3)\,] = \left[ \begin{array}{ccc} 0 & -1 & 0  \\ 0 & 0 & 0 \\ 0 &  0  & 0  \end{array} \right]
\EqTag{ad3}
\end{equation}
this gives us our first step in the algorithm of Corollary \StRef{C2} as
\begin{equation}
\left[ \begin{array}{ccc} 1 & -f^3 & 0 \\ 0 & 1 & 0 \\ 0 & 0 &  1  \end{array} \right] \left[ \begin{array}{c} \omega^1 \\ \omega^2 \\ \omega^3 \end{array} \right]
= \left[ \begin{array}{c} \hat \omega^1 \\ \hat \omega^2\\ df^3 \end{array} \right]
\end{equation}
where
$$
\begin{aligned}
\left[ \begin{array}{cc} \hat \omega^1 \\ \hat \omega^2 \end{array} \right] & = \left[ \begin{array}{cc} 1 &   \frac{u_x^3}{u_{xx}}- \frac{1}{u_x} \\ 0 & 1 \end{array} \right]
\left[ \begin{array}{cc}  \omega^1 \\  \omega^2 \end{array} \right] \\
& = \left[ \begin{array}{cc}  dx+\frac{u_x^4-u_{xx}}{u_x u_{xx}} du+\frac{3 u_x^{10}+3 u u_{xx}^2 u_x^4+u u_{xx}^3}{u_{xx}^3 u_x^2} du_x -\frac{u_x^3(u_x^6+u u_{xx}^2)}{u_{xx}^4} d u_{xx}\\  du+\frac{3 u_x^5}{u_{xx}^2} du_x-\frac{u_x^6}{u_{xx}^3} du_{xx} \end{array} \right]. 
\end{aligned}
$$
Now
$$
d \hat \omega^1 =0, \quad d \hat \omega^2 = 0. 
$$
The corresponding first integrals $f^1, f^2: M^0 \to \real$ satisfying the equations $ \hat \omega^1 =df^1, \hat \omega^2 = df^2$ are given by
\begin{equation}
f^1 = x+ \frac{u_x^{10}+3u u_{xx}^2u_x^4-3 u u_{xx}^3}{ 3 u_ x u_{xx}^3}\ , \quad
f^2 = u+\frac{u_x^6}{2 u_{xx}^2}.
\EqTag{ex1f1f2}
\end{equation}
\end{Example}

\medskip

\section{Lie's Third Theorem: The Vector Field Case}\StTag{SLieT}

The vector field version of Lie's third theorem is the following.

\begin{Theorem} (Lie's third theorem for vector fields) Let $\lieg$ be an $n$-dimensional real Lie algebra. There exists $n$ pointwise linearly independent vector fields on $\real^n$ whose span form a Lie algebra of vector fields that is isomorphic to $\lieg$.
\end{Theorem}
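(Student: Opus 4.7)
The plan is to reverse the reduction of Corollary \StRef{C2}. Instead of starting with forms $\omega^i$ satisfying \EqRef{FSE} and reducing them to exact forms $df^i$ via $n$ quadratures, I would start on $\real^n$ with coordinate functions $(f^1,\ldots,f^n)$ (so $df^i$ are automatically available) and define a Maurer-Cartan frame by undoing the matrix-exponential transformations. Specifically, for a solvable $\lieg$ with basis $\beta=\{\bfe_i\}$ adapted to a sequence of codimension one ideals, I would set
\begin{equation*}
\bfomega = \rme^{-f^n[\ad(\bfe_n)]}
\begin{pmatrix} \rme^{-f^{n-1}[\ad_1(\bfe_{n-1})]} & \mathbf{0}_1 \\ \mathbf{0}_1^T & 1 \end{pmatrix}
\cdots
\begin{pmatrix} \rme^{-f^{2}[\ad_{n-2}(\bfe_{2})]} & \mathbf{0}_{n-2} \\ \mathbf{0}_{n-2}^T & I_{n-2} \end{pmatrix}
\begin{pmatrix} df^1 \\ \vdots \\ df^n \end{pmatrix}.
\end{equation*}
Because matrix exponentials are invertible at every point, this smooth linear combination of the $df^i$ is pointwise linearly independent on all of $\real^n$.

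The next step is to verify that $d\omega^i + \tfrac12 C^i_{jk}\omega^j\wedge\omega^k = 0$. I would do this by reverse induction, peeling off one matrix factor at a time. After multiplying only by the innermost factor $\rme^{-f^n[\ad(\bfe_n)]}$, the reverse-direction computation in the proof of Theorem \StRef{reduce} shows directly that the resulting forms satisfy the structure equations of the codimension-one extension of the abelian quotient. Iterating the same argument up the ideal tower $\liek_n\subset\liek_{n-1}\subset\cdots\subset \liek_0=\lieg$ produces, at step $s$, forms satisfying the Maurer-Cartan equations for $\liek_{n-s}$; after $n$ steps one arrives at forms satisfying \EqRef{FSE} for $\lieg$ on all of $\real^n$. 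Taking the pointwise dual frame $\{X_1,\ldots,X_n\}$ defined by $\omega^i(X_j)=\delta^i_j$, contraction of the structure equations with $X_a$ and $X_b$ gives $[X_a,X_b]=C^k_{ab}X_k$, so $\spn\{X_i\}$ is a Lie algebra of vector fields isomorphic to $\lieg$.

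For a general (non-solvable) $\lieg$ the codimension-one-ideal tower used above is unavailable. The most direct way to supplement the construction is to combine the solvable-case argument with Ado's theorem, which provides a faithful embedding $\lieg\hookrightarrow\gl(N,\real)$ and thereby realizes $\lieg$ as left-invariant vector fields on the associated matrix Lie group near the identity; alternatively one may appeal to the Levi decomposition $\lieg=\lies\ltimes\lier$ and apply the solvable construction to the radical $\lier$ after first handling the semisimple part via its matrix representation. The main obstacle in both strategies is the global extension from a neighborhood of the identity to all of $\real^n$: the solvable case is the only one in which the explicit matrix-exponential formula above gives a globally defined frame for free, precisely because the iterated codimension-one reductions produce everywhere-invertible transition matrices.
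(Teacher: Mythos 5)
Your construction for solvable $\lieg$ is essentially identical to the paper's: you invert the reduction of Corollary \StRef{C2} to obtain the coframe of Theorem \StRef{PLT} (your formula is \EqRef{Lieom} with $f^i=x^i$, and your $I_{n-2}$ in the last block is the correct size), verify the Maurer--Cartan equations by reversing the computation in the proof of Theorem \StRef{reduce}, and pass to the dual frame exactly as in Corollary \StRef{CLV}. For the statement as given --- arbitrary $\lieg$ --- neither you nor the paper supplies a complete proof: the paper defers the general case to Cartan's ODE-existence argument, while your sketches via Ado's theorem and the Levi decomposition are honestly flagged as incomplete on precisely the global-extension issue, so you are not behind the paper on this point.
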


Starting with a solvable Lie algebra $\lieg$ we give an algebraic proof of this version of Lie's third theorem by constructing the vector fields using only the matrix exponential. The proof follows almost directly from Corollary \StRef{C2}. First we give the dual version. 

\begin{Theorem} \StTag{PLT} Let $\lieg$ be an $n$-dimensional solvable Lie algebra and  let  $\beta= \{ \bfe_i \}_{1\leq i \leq n} $ be a basis for $\lieg$  adapted to a sequence of codimension one ideals $\liek_s\subset \liek_{s-1}$, $s=1,\ldots,n$ in $\lieg$. Let $\ad_s(\be_{n-s}) \in {\der}(\liek_s), s=0,\ldots, n-2$ be the derivation of $\liek_s$ defined in \EqRef{alseq} by
\begin{equation}
\ad_s(\be_{n-s}) =  \ad ( \bfe_{n-s})|_{\liek_{s}} , \quad s=0,\ldots, n-2.
\EqTag{alseq2}
\end{equation}
Let $\tau^i$ be the  $n$ point-wise linear independent $1$-forms  on $\real^n$ defined by
\begin{equation}
\left[ \begin{array}{c} \tau^1 \\ \vdots \\ \tau^n \end{array} \right] = 
 \rme^{-x^{n}[\ad(\be_n) ]} 
   \left[ \begin{array}{cc} \rme^{-x^{n-1}[\ad_{1}{(\be_{n-1})}]} & {\bf 0}_1 \\ {\bf 0}_1^T & 1 \end{array} \right]
 \ldots
\left[ \begin{array}{cc} \rme^{-x_{2}[\ad_{n-2}(\be_2)]} & {\bf 0}_{n-2} \\ {\bf  0}_{n-2}^T & {\rm I}_{n-2} \end{array} \right]
\left[ \begin{array}{c} dx^1 \\ \vdots \\ dx^n \end{array} \right]
\EqTag{Lieom}
\end{equation}
where $[\ad_s(\be_{n-s})]$ is the $n-s$ by $n-s$ matrix representation of $\ad_s(\be_{n-s})$ in the basis  \EqRef{Abas} for $\liek_{s}$, $s=0,\ldots,n-2$,  ${\rm I}_{s}$ is the $s$ by $s$ identity matrix, and ${\bf 0}_s$ is the $n-s$ by $s$ zero matrix.
Then $\tau^i$ satisfy
\begin{equation}
d \tau^i +\frac{1}{2}C^i_{jk} \tau^j \wedge \tau^k = 0,
\EqTag{LTF}
\end{equation}
where $C^i_{jk}$ are the structure constants for $\lieg$ in the basis $\beta$.
\end{Theorem}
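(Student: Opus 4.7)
The plan is to invert the reduction algorithm of Corollary \StRef{C2}: formula \EqRef{Lieom} is precisely what one obtains by replacing each function $f^i$ in \EqRef{FIE} by the coordinate $x^i$ on $\real^n$ and inverting the matrix product. I would build up $\tau^1,\ldots,\tau^n$ inductively along the sequence $\liek_{n-1}\subset\liek_{n-2}\subset\cdots\subset\liek_0=\lieg$, starting from the closed forms $dx^1,\ldots,dx^n$ and reversing the one-step reduction of Theorem \StRef{reduce} at each stage.

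The first ingredient I would establish is a converse to Theorem \StRef{reduce}: if $\liek\subset\lieg$ is a codimension one ideal with adapted basis and $\hbfomega$ satisfies the split equations \EqRef{FSE3} with $\hat\omega^n=df$ exact, then
$$
\bfomega:=\rme^{-f[\ad(\be_n)]}\hbfomega
$$
satisfies the adapted structure equations \EqRef{ASE}. The proof is the calculation in Theorem \StRef{reduce} run backwards. Writing $B=\rme^{-f[\widetilde\ad(\be_n)]}$ so that $dB=-df\,B[\widetilde\ad(\be_n)]$, the automorphism identity $B^a_b C^b_{cd}=C^a_{ef}B^e_c B^f_d$ turns $B^a_b\,d\hat\omega^b$ into $-\tfrac{1}{2}C^a_{cd}\omega^c\wedge\omega^d$; and $dB^a_b\wedge\hat\omega^b$, combined with $\hat\omega^b=(B^{-1})^b_e\omega^e$ and the commutation $B[\widetilde\ad(\be_n)]B^{-1}=[\widetilde\ad(\be_n)]$, collapses to $-C^a_{ne}\omega^n\wedge\omega^e$. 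Together with $d\omega^n=d\hat\omega^n=0$, these yield \EqRef{ASE}.

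I would then induct on the integer $m$, $1\le m\le n$, equal to the dimension of the Lie algebra for which Maurer--Cartan forms have already been produced. The base case $m=1$ is immediate: $dx^1$ is the Maurer--Cartan form of the one-dimensional abelian algebra $\liek_{n-1}=\spn\{\be_1\}$. At the inductive step, assume that after applying the $m-1$ rightmost matrices in \EqRef{Lieom} the vector of forms has the shape $(\eta^1,\ldots,\eta^m,dx^{m+1},\ldots,dx^n)$ with $(\eta^1,\ldots,\eta^m)$ satisfying the Maurer--Cartan equations of $\liek_{n-m}$. The inclusion $\liek_{n-m}\subset\liek_{n-m-1}$ is codimension one with adapted top basis vector $\be_{m+1}$, and setting $\hat\omega^\alpha=\eta^\alpha$ for $\alpha\le m$ together with $\hat\omega^{m+1}=dx^{m+1}$ satisfies the hypotheses of the converse with $f=x^{m+1}$. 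Applying it multiplies the first $m+1$ entries by $\rme^{-x^{m+1}[\ad_{n-m-1}(\be_{m+1})]}$, leaves the trailing $dx^{m+2},\ldots,dx^n$ fixed, and produces the $(m+1)$-tuple of Maurer--Cartan forms for $\liek_{n-m-1}$. After $n-1$ steps the construction reproduces the matrix product in \EqRef{Lieom} applied to $(dx^1,\ldots,dx^n)^T$ and establishes \EqRef{LTF}. Pointwise linear independence is automatic: each factor in \EqRef{Lieom} is a matrix exponential, hence invertible at every point of $\real^n$, and the total transformation carries the pointwise linearly independent coframe $(dx^i)$ to $(\tau^i)$.

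The hard part, if any, is purely organizational: at each inductive step one must correctly identify the ambient algebra $\liek_{n-m-1}$, its codimension one ideal $\liek_{n-m}$, the adapted top basis vector $\be_{m+1}$, and the coordinate $x^{m+1}$ that plays the role of the exact form. Once these alignments are made, the converse of Theorem \StRef{reduce} carries all the computational weight, and no further integrations or ODE arguments are needed.
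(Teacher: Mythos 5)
Your proposal is correct and follows essentially the same route as the paper: the paper's proof likewise views \EqRef{Lieom} as the inversion of Corollary \StRef{C2} with $f^i=x^i$ and argues by induction along the sequence of codimension one ideals, verifying the structure equations by ``reversing the computation in the proof of Theorem \StRef{reduce}.'' Your explicitly stated converse of Theorem \StRef{reduce}, with the automorphism identity and the commutation of $B$ with $[\widetilde\ad(\be_n)]$, simply fills in the details that the paper leaves to that one phrase.
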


Before presenting the proof of Theorem \StRef{PLT}, let $X_i, 1\leq i \leq n$ be the dual frame to $\tau^i$ in equation \EqRef{Lieom}. This produces the $n$ linearly independent vector fields on $\real^n$, given by
\begin{equation}
\left[ \begin{array}{c} X_1 \\ \vdots \\ X_n \end{array} \right] = 
{\rm e}^{x_n[\ad(\be_n)]^T}\left[ \begin{array}{cc} \rme^{x^{n-1}[\ad_{1}{(\be_{n-1})}]^T} & {\bf 0}_1 \\ {\bf 0}_1^T & 1 \end{array} \right] \cdots \left[ \begin{array}{cc} {\rm e}^{x_{2}[\ad_{n-2}(\be _2)]^T } & {\bf 0}_{n-2} \\ {\bf 0}^T_{n-2} & {\rm I}_{n-2}   \end{array}\right]    \left[ \begin{array}{c} \partial_{x^1} \\ \vdots \\ \partial_{x^n} \end{array} \right],
\EqTag{LieX}
\end{equation}
where $T$ is the transpose. We then have the following, which is Lie's third theorem for vector fields.

\begin{Corollary}\StTag{CLV}  The  $n$ vector fields $X_i$ on $\real^n$ defined in \EqRef{LieX} are point-wise linearly independent and satisfy
\begin{equation}
[X_i,X_j]=C^k_{ij} X_k
\EqTag{BLA}
\end{equation}
where $C^i_{jk}$ are the structure constants for $\lieg$ in the basis $\beta$ of Theorem \StRef{PLT}.
Therefore the vector fields in \EqRef{LieX} produce a solution to Lie's third Theorem (for vector fields) using only the matrix exponential. The linear map $\phi:\lieg \to \spn\{ X_i\}$ induced by
\begin{equation}
\phi({\bf e}_i) = X_i
\EqTag{giso}
\end{equation}
is a Lie algebra isomorphism.
\end{Corollary}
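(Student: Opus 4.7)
The plan is to deduce the bracket relations \EqRef{BLA} directly from the Maurer--Cartan equations \EqRef{LTF} of Theorem \StRef{PLT}, using the standard duality between a coframe satisfying Maurer--Cartan-type equations and the dual frame satisfying the corresponding Lie bracket relations. Most of the work is bookkeeping in matrix form: once the duality between $\{\tau^i\}$ and $\{X_i\}$ is verified, the rest follows from a single application of the exterior derivative formula.

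First I would check that the vector fields $X_i$ in \EqRef{LieX} are precisely the dual frame to the coframe $\{\tau^i\}$ of \EqRef{Lieom}. Writing \EqRef{Lieom} compactly as $[\tau^i] = A(x)\,[dx^j]$, where $A(x)$ denotes the displayed product of (block-embedded) matrix exponentials with negative exponents, the dual frame with respect to $\{\partial_{x^i}\}$ is by definition $[X_i] = (A(x)^T)^{-1}\,[\partial_{x^j}]$. Each factor $\rme^{-x^s[\ad_s(\be_{n-s})]}$ has inverse $\rme^{x^s[\ad_s(\be_{n-s})]}$; transposition commutes with the matrix exponential; and the block-diagonal structure $\mathrm{diag}(\,\cdot\,,\,\mathrm{I}_s)$ is preserved under both operations. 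The two order reversals (one under transposition of a product, one under inversion of a product) cancel, so $(A^T)^{-1}$ reproduces exactly the product appearing in \EqRef{LieX}. Since $A(x)$ is invertible everywhere, both $\{\tau^i\}$ and $\{X_i\}$ are pointwise linearly independent on all of $\real^n$, which establishes the first assertion of the corollary.

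With the duality established, I would apply the classical identity
\[
 d\tau^i(X_j, X_k) = X_j(\tau^i(X_k)) - X_k(\tau^i(X_j)) - \tau^i([X_j,X_k]).
\]
Because $\tau^i(X_k)=\delta^i_k$ is constant, the first two terms vanish, so $d\tau^i(X_j,X_k) = -\tau^i([X_j,X_k])$. On the other hand, evaluating the Maurer--Cartan equation \EqRef{LTF} on the pair $(X_j,X_k)$ yields $d\tau^i(X_j,X_k) = -C^i_{jk}$. Equating the two gives $\tau^i([X_j,X_k])=C^i_{jk}$, and since $\{X_i\}$ is a frame this forces $[X_j,X_k]=C^i_{jk}X_i$, which is \EqRef{BLA}. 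The linear extension of $\phi(\be_i)=X_i$ is then a bijection $\lieg \to \spn\{X_i\}$ sending basis to basis and carrying the bracket of $\lieg$ onto the bracket of the $X_i$, hence a Lie algebra isomorphism.

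The only step requiring genuine care is the duality bookkeeping in the first paragraph, where the block embeddings at different levels of the sequence \EqRef{sid}, combined with the two order reversals under transposition and inversion, make the pattern of factors easy to misread; the exterior derivative computation and the verification that $\phi$ is a Lie algebra isomorphism are then immediate.
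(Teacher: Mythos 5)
Your proposal is correct and follows essentially the same route the paper intends: the paper gives no separate proof of Corollary \StRef{CLV}, presenting it as an immediate consequence of Theorem \StRef{PLT} once $X_i$ is identified as the dual frame to the coframe $\tau^i$ of \EqRef{Lieom}, which is exactly the duality you verify before applying the standard identity $d\tau^i(X_j,X_k)=-\tau^i([X_j,X_k])$. Your bookkeeping with $(A^T)^{-1}$ correctly recovers the product in \EqRef{LieX}, and the sign and pointwise-independence checks are all in order.
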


\begin{proof}(Theorem \StRef{PLT})  The idea of the proof of Theorem \StRef{PLT}  is to think of solving equation \EqRef{FIE} of Corollary \StRef{C2} for $\omega^i$. This gives equation \EqRef{Lieom} if we substitute  $f^i=x^i$ and $\omega^i=\tau^i$.  

The proof goes by induction. Consider the differential forms on $\real^n$ given by
\begin{equation}
\left[ \begin{array}{c} \htau^1 \\ \vdots \\ \htau^n \end{array} \right] = 
 \left[ \begin{array}{cc} \rme^{-x^{n-1}[\ad_{1}{(\be_{n-1})}]} & {\bf 0}_1 \\ {\bf 0}_1^T & 1 \end{array} \right]
 \ldots
\left[ \begin{array}{cc} \rme^{-x_{2}[\ad_{n-2}(\be_2)]} & {\bf 0}_{n-2} \\ {\bf  0}_{n-2}^T & {\rm I}_{n-2} \end{array} \right]
\left[ \begin{array}{c} dx^1 \\ \vdots \\ dx^n \end{array} \right] .
\EqTag{somega}
\end{equation}
By the induction hypothesis, the forms $\{ \htau^a\}_{1\leq a \leq n-1}$ from equation \EqRef{somega} satisfy,
\begin{equation*}
d  \htau^a+ \frac{1}{2} C^{a}_{bc}  \htau^b \wedge  \htau^c=0.
\end{equation*}
where $C^a_{bc}$ are the structure constants of the ideal $\liek_{n-1}$ in the basis $\hat \beta=\{ \be_{a}\}_{1\leq a \leq n-1}$. 

The fact that  the differential forms in equation \EqRef{Lieom} satisfy equation \EqRef{LTF} follows simply by reversing the computation in proof of Theorem \StRef{reduce} 
with $\bftau = \exp( -x^n [\ad(\be_n)]) \hat \bftau$ (in particular using equations \EqRef{Ainbas} through \EqRef{bigd}). This shows that the forms $\tau^i$ in \EqRef{Lieom} satisfy equation \EqRef{LTF}.  \end{proof}

\begin{Example}\StTag{Ex2} Consider the $5$-dimensional solvable Lie algebra $A_{5,ab}$ from \cite{winternitz:1976a}, with multiplication table
\begin{equation}
[\be_1, \be_4] = b\, \be_1,\ [\be_1, \be_5] = a \, \be_1,\ [\be_2, \be_4] = \be_2,\ [\be_2, \be_5] = -\be_3,\ [\be_3, \be_4] = \be_3,\ [\be_3, \be_5] = \be_2,
\EqTag{SCe1}
\end{equation}
where the two real parameters $a,b$ satisfy $a^2+b^2\neq 0$. The basis $\{ \be_i\}_{1\leq i \leq 5}$ is adapted to the sequence of co-dimension one ideals $\liek_s=\spn\{ \be_1,\ldots, \be_{5-s}\}$, $s=0,\ldots,5$.

We find the non-zero $\ad_s(\be_{n-s})$ matrices defined in equation \EqRef{alseq2} using equations \EqRef{SCe1} to be
\begin{equation}
[\ad(\be_5 )]= \left[ \begin{array}{ccccc} -a & 0 &  0 & 0 &0 \\ 0 & 0 &-1  & 0 & 0 \\ 0 & 1 &0 & 0 & 0 \\  0 & 0  &0 & 0 & 0 \\ 0 & 0  &0 & 0 & 0 \end
{array} \right], \quad 
[\ad_1(\be_4)]= \left[ \begin{array}{cccc} -b & 0 &  0 & 0 \\ 0 & -1 & 0  & 0 \\ 0 & 0 & -1 & 0 \\ 0 & 0& 0 & 0 \end{array} \right].
\EqTag{Adms}
\end{equation}
The coframe $\bftau$ on $\real^5 $ is computed from \EqRef{Lieom} using equation \EqRef{Adms} and $\ad_3(\be_2)=0$, $\ad_2(\be_3)=0$ is then
\begin{equation}
\begin{aligned}
\left[
\begin{array}{c} 
\tau^1 \\ \tau^2 \\ \tau^3 \\ \tau^4 \\ \tau^5 \end{array}\right]
& =  
\left[ \begin{array}{ccccc} \rme^{a x^5} & 0 &  0 & 0 & 0 \\ 0 & \cos x^5  & \sin x^5  & 0& 0 \\ 0 & -\sin x^5  & \cos x^5 & 0  & 0\\ 0 & 0  &0 & 1 & 0\\ 0 & 0 & 0& 0 & 1\end{array} \right]
 \left[ \begin{array}{ccccc} \rme^{b x^4} & 0 &  0 &0 & 0  \\ 0 & \rme^{x^4} & 0 &0 & 0  \\ 0 & 0 & \rme^{x^4} &0 & 0   \\ 0 & 0& 0 & 1 & 0 \\ 0 & 0 & 0 & 0& 1   \end{array} \right]
\left[
\begin{array}{c} 
dx^1 \\ dx^2 \\ dx^3 \\ dx^4 \\ dx^5 \end{array}\right] \\
&= \left[ 
\begin{array}{c} 
\rme^{a x^5 + bx^4 b} dx^1  \\
\cos x^5\, \rme^{x^4} dx^2+\sin x^5 \, \rme^{x^4} dx^3
\\
 -\sin x^5\, \rme^{x^4} dx^2+\cos x^5 \, \rme^{x^4} dx^3 \\ dx^4 \\ dx^5
\end{array} \right]
\end{aligned}
\EqTag{Ex1forms}
\end{equation}
The structure equations are then easily checked to be,
$$
d\tau^ 1= -b\, \tau^1\wedge \tau^4-a\, \tau^1 \wedge \tau^5,\  d\tau^2 = -\tau^2 \wedge \tau^4-\tau^3 \wedge \tau^5,\ d\tau^3 = \tau^2 \wedge \tau^5-\tau^3 \wedge \tau^4,\ d\tau^4 = 0,\ d\tau^5 = 0.
$$
\end{Example}

\section{Lie's Third Theorem: The Group Case}\StTag{SLie3G}

The group version of Lie's third theorem is the following.

\begin{Theorem} (Lie's third theorem) Let $\lieg$ be an $n$-dimensional real Lie algebra. There exists an $n$-dimensional Lie group $G$ whose Lie algebra of left invariant vector fields is isomorphic to $\lieg$.
\end{Theorem}

Our goal in this section is to give a simple proof to Lie's third theorem for solvable Lie algebras, which will allow us to explicitly construct the group multiplication for a simply connected solvable Lie group using $n$ quadratures and matrix exponentiation (Theorem \StRef{L3M}).  As an intermediate step in the construction of the multiplication map, we show how to construct the map $\rho:M \to G$ in equation \EqRef{MF1} for solvable Lie algebras using quadrature and matrix exponentiation.

\subsection{Lie's third theorem for solvable groups}\StTag{Lie3G}

We begin by showing that given the vector fields $\{X_i\}_{1\leq i \leq n}$ on  $\real^n$ in equation \EqRef{LieX}, we can give a global Lie group structure on $\real^n$ such that $\{X_i\}_{1\leq i \leq n}$ are a basis for the left invariant vector fields. This proves Lie's third theorem (above) for solvable Lie groups while also showing that a simply connected solvable Lie group is (topologically) $\real^n$. The key aspect of the proof, which is used Sections \StRef{movf} and \StRef{multmap}, is that we have a closed form formula for the left invariant vector fields.  For a recent discussion of Lie's third theorem see \cite{vanest:1988a}.

\begin{Theorem} \StTag{LG1} Let $\lieg$ be an $n$-dimensional solvable Lie algebra and let $\beta=\{ {\bf e}_i \}_{1 \leq i \leq n}$ be a basis for $\lieg$ adapted to a sequence of codimension one ideals  $\liek_s\subset \liek_{s-1}$, $s=1,\ldots,n$. Let $\{X_i\}_{1\leq i \leq n}$ be the vector fields on $\real^n$ given in equation  \EqRef{LieX}. Given a point $x_0\in \real^n$, there exists a smooth multiplication map $\mu: \real^n \times \real^n\to \real^n$ so that $\real^n$ is a solvable Lie group with identity $x_0$ and with Lie algebra of left invariant vector fields given by $\lieg=\spn \{ X_i\}_{1\leq i \leq n}$.
\end{Theorem}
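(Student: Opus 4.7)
The plan is to prove Theorem~\StRef{LG1} by induction on $n$, realizing $\real^n$ as an iterated semidirect product that mirrors the product of matrix exponentials in~\EqRef{LieX}. I will first construct a multiplication with identity at the origin $0\in\real^n$; the case of an arbitrary identity $x_0$ is handled at the end by transporting via a left translation inside the constructed group.

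For the base case $n=1$, take $\real$ with addition, so that $X_1=\partial_{x^1}$ is left-invariant. For the inductive step, let $\liek=\liek_1$ be the codimension-one ideal and $\hat\beta=\{\be_1,\ldots,\be_{n-1}\}$ the restricted adapted basis, which is adapted to the inherited sequence $\liek_1\supset\cdots\supset\liek_{n-1}$. By the inductive hypothesis applied with identity at $0\in\real^{n-1}$, there is a Lie group structure $K$ on $\real^{n-1}$ with Lie algebra $\liek$ whose left-invariant vector fields are exactly the $Z_a$ given by the $(n-1)$-dimensional analogue of~\EqRef{LieX}. Because $K$ is diffeomorphic to $\real^{n-1}$, it is simply connected, and so the one-parameter family $\rme^{s\widetilde\ad(\be_n)}$ of Lie algebra automorphisms of $\liek$ lifts uniquely to a smooth one-parameter family of group automorphisms $\Phi_s\colon K\to K$. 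I then form the semidirect product
$$
G_0 \;=\; K\rtimes_\Phi\real,\qquad (k_1,s_1)\cdot(k_2,s_2)\;=\;\bigl(k_1\,\Phi_{s_1}(k_2),\,s_1+s_2\bigr),
$$
whose underlying manifold is $\real^{n-1}\times\real=\real^n$, whose Lie algebra is $\liek\rtimes\real=\lieg$, and whose identity is $(0,0)=0$.

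The critical verification is that the left-invariant vector fields of $G_0$ coincide with the $X_i$ of~\EqRef{LieX}. A direct computation in the semidirect product yields $\tilde X_n=\partial_{x^n}$ and, for $1\le a\le n-1$,
$$
\tilde X_a(k,s) \;=\; (L_k)_*\!\bigl(\rme^{s\widetilde\ad(\be_n)}(\be_a)\bigr)
\;=\; \sum_{b}\bigl[\rme^{s[\widetilde\ad(\be_n)]}\bigr]_{ba}\,Z_b(k),
$$
which in matrix form, together with the block decomposition~\EqRef{Ainbas} of $[\ad(\be_n)]$, becomes
$$
[\tilde X_1,\ldots,\tilde X_n]^T \;=\; \rme^{x^n[\ad(\be_n)]^T}\,[Z_1,\ldots,Z_{n-1},\partial_{x^n}]^T .
$$
Substituting the inductive formula for $[Z_1,\ldots,Z_{n-1}]^T$ and padding each successively smaller exponential with an identity block reproduces~\EqRef{LieX} exactly, so $\tilde X_i=X_i$.

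To place the identity at an arbitrary point $x_0\in\real^n$, let $\psi=L_{x_0}$ denote left translation by $x_0$ inside $G_0$ and define a new multiplication by $\mu(a,b)=\psi\bigl(\mu_0(\psi^{-1}(a),\psi^{-1}(b))\bigr)=a\cdot x_0^{-1}\cdot b$, all products on the right taken in $G_0$. By construction $\psi\colon G_0\to(\real^n,\mu)$ is a group isomorphism sending $0$ to $x_0$, so the new identity is $x_0$; moreover, since $\psi$ is a left translation in $G_0$, it preserves every left-invariant vector field, whence $\psi_*X_i=X_i$ and the left-invariant vector fields of $(\real^n,\mu,x_0)$ span $\spn\{X_i\}$. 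I expect the main obstacle to be the inductive lift of the one-parameter family $\rme^{s\widetilde\ad(\be_n)}$ from automorphisms of $\liek$ to automorphisms of $K$, for which one uses the standard fact that on a simply connected Lie group every Lie algebra automorphism integrates uniquely to a group automorphism, with smoothness in $s$ following from uniqueness of the lift. The remaining matching of the left-invariant vector fields with~\EqRef{LieX} is then a bookkeeping calculation built on the block structure already exploited in Section~\StRef{Sreduce}.
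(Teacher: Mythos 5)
Your construction is correct, but it follows a genuinely different route from the paper. The paper deduces Theorem \StRef{LG1} from Palais's global integration theorem \cite{palais:1957a}: the only thing to check is that the frame $\{X_i\}$ consists of \emph{complete} vector fields (Lemma \StRef{CV}), which is done by an induction on the flow equations \EqRef{Flow1}--\EqRef{Flow3}, using that along the flow of $X_r$ the coordinate $x^n$ is constant so the flow reduces to that of a constant-coefficient combination of left-invariant fields on $\real^{n-1}$. You instead build the group explicitly as an iterated semidirect product $K\rtimes_\Phi\real$, lifting the one-parameter family $\rme^{s\widetilde\ad(\be_n)}\in\Aut(\liek)$ to automorphisms of the simply connected group $K$, and then verify by the block computation of \EqRef{Ainbas} that the left-invariant fields of the semidirect product are exactly the $X_i$ of \EqRef{LieX}; the relocation of the identity to $x_0$ via $\mu(a,b)=a\cdot x_0^{-1}\cdot b$ is also fine, since left translations preserve left-invariant fields. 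Each approach has something to offer: the paper's is shorter once Palais is granted and isolates completeness as the essential analytic input, while yours avoids Palais entirely, gives the multiplication in closed inductive form (anticipating Theorem \StRef{L3M}), and re-derives the bracket relations of Corollary \StRef{CLV} as a byproduct of realizing the $X_i$ as left-invariant fields of a group with Lie algebra $\lieg$. One small point to make explicit in your induction: you need the hypothesis in the strengthened form that each individual $Z_a$ is the left-invariant field corresponding to $\be_a$ (not merely that the span is a Lie algebra of left-invariant fields), so that $\widetilde\ad(\be_n)$ transports to a derivation of the Lie algebra of $K$ in the stated basis; this is exactly the content of Corollary \StRef{CLV} and is preserved by your inductive step, so it is bookkeeping rather than a gap.
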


The proof of Theorem \StRef{LG1} follows directly from Theorem 8.7 in \cite{sharpe:1997a} (see also \cite{vanest:1988a}) and the next lemma.

\begin{Lemma}\StTag{CV} Every vector field in the span of the vector fields $\{X_i\}_{ 1\leq i \leq n}$ on $\real^n$ in equation \EqRef{LieX} is complete.
\end{Lemma}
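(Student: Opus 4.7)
The plan is to proceed by induction on $n = \dim \lieg$, with base case $n = 1$ trivial since $X_1 = \partial_{x^1}$ is manifestly complete.

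For the inductive step, I would first unravel the block structure of the matrix product in \EqRef{LieX}. Because $\liek_1$ is a codimension one ideal in $\lieg$ and $\ad(\be_n)(\be_n) = 0$, the matrix $[\ad(\be_n)]$ in the adapted basis $\beta$ has vanishing last row and vanishing last column, so
$$
e^{x^n[\ad(\be_n)]^T} = \begin{pmatrix} e^{x^n[\widetilde \ad(\be_n)]^T} & \mathbf{0} \\ \mathbf{0}^T & 1 \end{pmatrix},
$$
where $\widetilde \ad(\be_n) \in \der(\liek_1)$ is the restriction, as in Theorem \StRef{reduce}. Every remaining factor in \EqRef{LieX} has a bottom-right identity block by construction, so their product, re-partitioned in $(n-1)+1$ blocks, is block-diagonal with some $\tilde B(x^2,\ldots,x^{n-1})$ in the upper-left and $1$ in the lower-right; this $\tilde B$ is exactly the matrix produced by formula \EqRef{LieX} applied to the $(n-1)$-dimensional solvable Lie algebra $\liek_1$, taken with its inherited adapted basis $\{\be_1,\ldots,\be_{n-1}\}$ and sequence $\liek_2\subset\cdots\subset\liek_n$ of codimension one ideals. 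Writing $\tilde B\,[\partial_{x^1},\ldots,\partial_{x^{n-1}}]^T = [\tilde X_1,\ldots,\tilde X_{n-1}]^T$, the $\tilde X_b$ are therefore precisely the vector fields on $\real^{n-1}$ produced by Corollary \StRef{CLV} for $\liek_1$.

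Combining these two block decompositions gives
$$
X_n = \partial_{x^n}, \qquad X_a = \sum_{b=1}^{n-1}\bigl(e^{x^n[\widetilde \ad(\be_n)]^T}\bigr)_{ab}\,\tilde X_b \qquad (1 \le a \le n-1).
$$
Then $X_n$ is clearly complete with flow $(x^1,\ldots,x^n) \mapsto (x^1,\ldots,x^n+t)$. For $a < n$, $X_a$ carries no $\partial_{x^n}$ component, so every integral curve is confined to a slice $\{x^n = c\}$; on such a slice $X_a$ restricts to the constant linear combination
$$
Y_{a,c} = \sum_{b=1}^{n-1}\bigl(e^{c[\widetilde \ad(\be_n)]^T}\bigr)_{ab}\,\tilde X_b
$$
of the $\tilde X_b$ on $\real^{n-1}$.

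By the inductive hypothesis applied to $\liek_1$, each $\tilde X_b$ is complete on $\real^{n-1}$, and by Corollary \StRef{CLV} applied to $\liek_1$ these fields span a finite-dimensional Lie algebra of vector fields isomorphic to $\liek_1$. Palais's theorem \cite{palais:1957a} then promotes completeness of the basis to completeness of every element of this Lie algebra, so $Y_{a,c}$ is complete on $\real^{n-1}$, and hence $X_a$ is complete on $\real^n$, closing the induction. The main obstacle I anticipate is the structural bookkeeping of the block decomposition — peeling off the outermost exponential $e^{x^n[\ad(\be_n)]^T}$, verifying that the remaining product coincides with formula \EqRef{LieX} for $\liek_1$, and correctly tracking the restrictions of the $\ad_s$ operators between $\lieg$ and $\liek_1$ — after which Palais's theorem finishes the argument cleanly.
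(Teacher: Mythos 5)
Your proposal is correct and follows essentially the same route as the paper: induct on $n$, peel off the outer factor $\rme^{x^n[\ad(\be_n)]^T}$ to identify the inner product with formula \EqRef{LieX} for $\liek_1$, observe that integral curves of $X_a$ ($a<n$) stay in slices $x^n=c$ where $X_a$ becomes a constant invertible linear combination of the complete fields $\tilde X_b$, and invoke Palais to conclude that such combinations (being left invariant for the group structure on $\real^{n-1}$) are complete. The paper phrases the slice argument via the explicit flow ODEs \EqRef{Flow1}--\EqRef{Flow3}, but the content is identical.
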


\begin{proof} We prove this by induction. Assume that the span of $n-1$ vector fields $\{ \tilde X_r\}_{ 1\leq r \leq n-1}$  on $\real^{n-1}$ with coordinates $x^1,\ldots, x^{n-1}$ given by 
\begin{equation}
\left[ \begin{array}{c} \tilde X_1 \\ \vdots \\ \tilde X_{n-1} \end{array} \right] = \left[\begin{array}{cc} 
\rme^{x^{n-1}[\ad_1(\be_{n-1})]}   & {\bf 0}_1 \\  {\bf 0}_1^T & 1 \end{array} \right] \ldots \left[ \begin{array}{cc} \rme^{x^{2}[\ad_{n-2}(\be_2)]^T } & {\bf 0}_{n-2} \\ {\bf 0}^T_{n-2} & {\rm I}_{n-2}   \end{array}\right]  \left[ \begin{array}{c} \partial_{x^1} \\ \vdots \\ \partial_{x^{n-1}} \end{array} \right],
\EqTag{LieX2}
\end{equation}
consists of complete vector fields. The vector fields $\{ \tilde X_r\}_{1\leq r \leq n-1}$ are globally defined on $\real^{n-1}$ and satisfy equation \EqRef{BLA} with the structure  constants for the co-dimension one sub-algebra $\liek_1 \subset \lieg$ defined in Theorem \StRef{PLT}. By  \cite{sharpe:1997a} (or \cite{vanest:1988a})  given $\tilde x_0 \in \real^{n-1}$ there exists a Lie group structure on $\real^{n-1}$ with $\tilde x_0$ as the identity such that $\{ \tilde X_r\}_{1\leq r \leq n-1}$ are a basis for the left invariant vector fields on $\real^{n-1}$. Denote this Lie group by $H$  and let $\lieh=\spn \{ \tilde X_r\}_{1\leq r \leq n-1}$   be the corresponding Lie algebra.

Now the $n$ vector fields $X_i$ in \EqRef{LieX}  can be written in terms of the vector fields in \EqRef{LieX2} considered as vector fields on $\real^n$ as
\begin{equation}
\left[ \begin{array}{c} X_1 \\ \vdots \\ X_n \end{array} \right] = \rme^{x^n[\ad(\be_n)]^T}  \left[ \begin{array}{c} \tilde X_1 \\ \vdots \\ \tilde X_{n-1} \\ \partial_{x^n} \end{array} \right] .
\EqTag{LieX3}
\end{equation}
Let $\tilde \xi^s_r,\ s,r=1,\ldots,n-1$ be the coefficients of the vector fields $\tilde X_ r$  and let $\eta^j_i, \ i,j=1,\ldots,n$ be the coefficients of the vector fields $X_i$. From equations \EqRef{LieX3} and \EqRef{Ainbas} we have
\begin{equation}
\eta^s_r = \left[ {\rm e}^{x^n[\widetilde \ad(\be_n)]}  \right]^u_r  \tilde \xi^s_u, \quad \eta^s_n =0, \quad \eta^n_r = 0, \quad \eta^n_n= 1. 
\EqTag{LieX4}
\end{equation}
The flow equations for the vector field on $\real^n$ given by $c^i X_i$, $c^i \in \real$,  are determined from \EqRef{LieX4} to be
\begin{equation}
\begin{aligned}
\frac{dx^s}{dt} & = c^r \left[ {\rm e}^{x^n[\widetilde \ad(\be_n)]}  \right]^u_r  \tilde \xi^s _ u, \\
\frac{dx^n}{dt} &= c^n.
\end{aligned}
\EqTag{Flow1}
\end{equation}
Integrating the last equation in \EqRef{Flow1} gives $x^n= c^n t+c^0$. Therefore the first equation in \EqRef{Flow1} can be written
\begin{equation}
\frac{dx^s}{dt}  = A^u(t)   \tilde \xi^s _ u, \quad s=1,\ldots,n-1,
\EqTag{Flow3}
\end{equation}
where 
$$
A^u(t) =  \left[{\rm e}^{(c^n t+c^0) [\widetilde \ad(\be_n]]}  \right]^u_r c^r
$$

Equation \EqRef{Flow3} is the equation of Lie-type for the function  $\sigma : \real \to \lieh$ given by
$$
\sigma(t)  =  A^u(t) \tilde X_u.
$$
Equations of Lie  type  admit global solutions in $t$ (\cite{onishchik:1993a} page 37), and hence $c^r X_r$ is complete. 
\end{proof}

In Section \StRef{multmap} we construct the multiplication map $\mu:G\times G \to G$, whose existence is guaranteed by Theorem \StRef{LG1}, using matrix exponentiation and quadrature.  In order to do this, we return to the question from the introduction of finding $\rho:M \to G$ in equation \EqRef{MF1} for
solvable Lie groups.

\subsection{Constructing the map $\rho:M\to G$  for solvable $G$}\StTag{movf}

Let $\{\omega^i\}_{1\leq i \leq n}$ be $n$ differential forms on a simply connected manifold $M$ satisfying equations \EqRef{FIE} (written as $d\bfomega_\lieg + \frac{1}{2} \bfomega_\lieg \wedge \bfomega_\lieg=0$ in Remark \StRef{BFR}). As noted in the introduction, it is well known that there exists a function $ \rho:M \to G $ such that $\omega^i= \rho^* \tau^i $
where $\tau^i$ are the left invariant Maurer Cartan forms on a Lie group $G$ with Lie algebra $\lieg$ (see \cite{sharpe:1997a}). Furthermore $\rho$ is unique up to left translation by an element of $G$. We now show how Corollary \StRef{C2} combined with Theorem \StRef{LG1} determines $\rho$ when $\lieg$ is solvable.

\begin{Theorem} \StTag{CMF}   Let $\lieg$ be an $n$-dimensional solvable real Lie algebra and let $\beta=\{ {\bf e}_i \}_{1 \leq i \leq n}$ be a basis for $\lieg$ adapted to a sequence of codimension one ideals  $\liek_s\subset \lieg$, $s=0,\ldots,n$. Let $(x^1,\ldots, x^n)$ be coordinates on the simplify connected solvable Lie group $G$ with basis of left invariant forms $\{ \tau^i \}_{1\leq i \leq n}$ given in \EqRef{Lieom}.

Let $\{\omega^i\}_{1\leq i \leq n}$ be $n$ differential forms on a simply connected manifold $M$ satisfying the structure equations \EqRef{FSE} where $C^i_{jk}$ are computed in the basis $\beta$ for $\lieg$ and let
$\rho : M \to G$ be given by 
\begin{equation}
\rho(p) = (x^1= f^1(p), \ldots,x^n= f^n(p) ), \quad {\rm for \ all} \ p \in M
\EqTag{deff1}
\end{equation}
where $f^i:M \to \real$ are the functions from Corollary \StRef{C2}.  Then 
\begin{equation}
\rho^* \tau^i = \omega^i.
\EqTag{pbf}
\end{equation}
\end{Theorem}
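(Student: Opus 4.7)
The plan is to show that Theorem \StRef{CMF} follows from essentially algebraic manipulation of the identity in Corollary \StRef{C2} together with the explicit formula \EqRef{Lieom} for the left invariant forms $\tau^i$. The key observation is that both formulas have the same structure: a product of matrix exponentials of the form $\rme^{s \, [\ad_k(\be_{n-k})]}$ acting on a column of one-forms, with the only difference being that in Corollary \StRef{C2} the scalars are the functions $f^i$ on $M$ and the column is $[\omega^1,\ldots,\omega^n]^T$, while in Theorem \StRef{PLT} the scalars are the coordinates $x^i$ on $\real^n$ (with the opposite sign) and the column is $[dx^1,\ldots,dx^n]^T$.

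First I would invert the identity \EqRef{FIE} of Corollary \StRef{C2}. Since each matrix factor is invertible with inverse obtained by negating the argument of the exponential, one gets
\begin{equation*}
\left[ \begin{array}{c} \omega^1\\ \vdots \\ \omega^n  \end{array} \right]
= \rme^{-f^n[\ad(\be_n)]} \left[ \begin{array}{cc}  \rme^{-f^{n-1}[\ad_{1}( \bfe_{n-1})]} & {\bf 0}_{1} \\ {\bf 0}_{1} ^T & 1 \end{array} \right] \cdots
\left[ \begin{array}{cc}  \rme^{-f^{2}[\ad_{n-2}(\be_2)]} & {\bf 0}_{n-2} \\ {\bf 0}^T_{n-2} & {\rm I}_{n-2} \end{array} \right]
\left[ \begin{array}{c}  d  f^1\\ \vdots \\df^n \end{array} \right].
\end{equation*}
Note the reversal of order and the sign flip of the exponents — this is the formula for $\bfomega$ expressed purely in terms of the $f^i$ and their differentials via matrix exponentials.

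Next I would pull back the formula \EqRef{Lieom} for $\bftau$ along $\rho$. Since $\rho(p)=(f^1(p),\ldots,f^n(p))$, we have $\rho^* x^i = f^i$ and $\rho^* dx^i = df^i$; because $\rho^*$ is an algebra homomorphism and commutes with matrix-valued function multiplication, applying $\rho^*$ term-by-term to \EqRef{Lieom} produces exactly the formula just derived for $\bfomega$. Comparing the two column-vector expressions component by component yields $\rho^* \tau^i = \omega^i$ for $i=1,\ldots,n$, which is \EqRef{pbf}.

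I do not anticipate any significant obstacle. The content of the proof is essentially bookkeeping: everything in \EqRef{Lieom} is already built out of the same matrix exponentials that appear in Corollary \StRef{C2} — this is by design, since \StRef{PLT} was obtained by running the algorithm of Section \StRef{Sreduce} in reverse on $\real^n$. The main things to verify carefully are that inverting the product of triangular-block matrices gives the formula with signs flipped and order reversed (each factor inverts individually, and the reversal is just $(AB)^{-1}=B^{-1}A^{-1}$ applied iteratively), and that the matrices $[\ad_s(\be_{n-s})]$ appearing in \EqRef{Lieom} on $G$ are identical to those appearing in \EqRef{FIE} on $M$ since both are determined by the same basis $\beta$ and the same sequence of codimension-one ideals $\liek_s\subset\lieg$.
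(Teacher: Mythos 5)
Your proposal is correct and follows essentially the same route as the paper: the paper's proof also inverts equation \EqRef{FIE} to obtain $\bfomega$ as the reversed product of negated exponentials applied to $[df^1,\ldots,df^n]^T$ (its equation \EqRef{MFP1}) and then observes that substituting $x^i=f^i$ into \EqRef{Lieom} reproduces that expression, which is exactly your pullback computation $\rho^*dx^i=df^i$, $\rho^*x^i=f^i$. The two checks you flag at the end (order reversal with sign flip under inversion, and the coincidence of the matrices $[\ad_s(\be_{n-s})]$ on $M$ and on $G$) are precisely the content of the paper's argument.
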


\begin{proof}  Let $f^i:M \to \real$ be the functions in Corollary \StRef{C2}  so that equation \EqRef{FIE} holds. Solving for $\omega^i$ in equation \EqRef{FIE} gives
\begin{equation}
\left[ \begin{array}{c} \omega^1\\ \vdots \\ \omega^n  \end{array} \right]
=
\rme^ {-f^n[\ad(\be_n)]} \, 
\left[ \begin{array}{cc}  \rme^{-f^{n-1}[\ad_{1}( \bfe_{n-1})]} & {\bf 0}_{1} \\ {\bf 0}_{1} ^T & 1 \end{array} \right]\cdots 
\left[ \begin{array}{cc}  \rme^{-f^{2}[\, \ad_{n-2}(\be_2)\,]} & {\bf 0}_{n-2} \\ {\bf 0}^T_{n-2} & {\rm I}_{n-2} \end{array} \right] 
\left[ \begin{array}{c}  d  f^1\\ \vdots \\df^n \end{array} \right].
\EqTag{MFP1}
\end{equation}
With $\{ \tau^i \}_{1\leq i \leq n}$ whose
formula is in given in  \EqRef{Lieom} in coordinates $(x^i)_{1\leq i \leq n}$, the map $\rho:M \to G$ in equation \EqRef{deff1} given in coordinates by  $x^i=f^i(p)$ for all $ p\in M$, satisfies equation \EqRef{pbf}. This follows at once by substituting $x^i=f^i$ in the expression for $\tau^i$ in equation \EqRef{Lieom}
which produces the expression for $\omega^i$ in equation \EqRef{MFP1}. 
\end{proof}

\begin{Remark} \StTag{RM1} Theorem \StRef{CMF} shows that determining the map $\rho:M \to G$ in equation \EqRef{MF1} for a solvable Lie group $G$  can be found by quadrature and the matrix exponential and is in fact found trivially from the $n$ functions  $f^i$ in Corollary \StRef{C2}. 
\end{Remark}

\begin{Example} In Example \StRef{Ex1} we have the three $1$-forms $\omega^1,\omega^2,\omega^3$ on $M^0$ given in equation \EqRef{omex1} which satisfy,
$$
d\omega^1=- \omega^2 \wedge \omega^3, \quad d \omega^2 = 0, \quad d \omega^3 =0.
$$
The $3$-dimensional simply connected Lie group  $G$ with Lie algebra $[\be_2,\be_3]=\be_1$
constructed as in Theorem \StRef{LG1} with coordinates $(x^1,x^2,x^3)$ has as a basis of left invariant forms (see \EqRef{Lieom}),
\begin{equation}
\tau^1=dx^1+x^3 dx^2, \quad \tau^2 = dx^2,\quad \tau^3=dx^3.
\EqTag{mte1}
\end{equation}
These $1$-forms satisfy $d \tau^1= -\tau^2 \wedge \tau^3$.

The functions $f^1,f^2$, and $f^3$ are given in equations \EqRef{om3} and \EqRef{ex1f1f2} in Example \StRef{Ex1} and these give the map $\rho:M^0 \to G$ in equation \EqRef{deff1}
 in Theorem \StRef{CMF} by
\begin{equation}
\begin{aligned}
x^1 &= f^1= x+ \frac{u_x^{10}+3u u_{xx}^2u_x^4-3 u u_{xx}^3}{ 3 u_ x u_{xx}^3},  \\
x^2 &= f^2= u+\frac{u_x^6}{2 u_{xx}^2},\qquad x^3 = f^3= \frac{1}{u_x}-\frac{u_x^3}{u_{xx}}.
\end{aligned}
\EqTag{GFI}
\end{equation}
It is easy to check $\rho^* \tau^i=\omega^i$, $i=1,2,3$, with $\omega^i $ in \EqRef{omex1} and $\tau^i$ in equation \EqRef{mte1}.

We now demonstrate that $\rho$ is equivariant.  Let $X_1,X_2,X_3$ be the right invariant vector fields on $G$ dual to $\tau^i$. Then $\rho_*(Z_i) = X_i$, and $\rho_* :\Gamma \to \lieg$ is an isomorphism, where $\lieg$ is the Lie algebra of right invariant vector fields and $\Gamma=\spn\{Z_i\}$  are the infinitesimal generators of the action  of $G$ on $M$. By an application of Theorem 13.1 in \cite{anderson-fels:2015a}, the function $\rho$ is $G$-equivariant.  

To explicitly show the equivariance of $\rho$, we make the change of coordinates $(a=x^1+x^2x^3,b=x^2,c=x^3)$ on $G$ so that the map $\rho:M^0 \to G$  from \EqRef{GFI} is given in these new coordinates by
\begin{equation}
a= x+\frac{u_x^5}{2u_{xx}^2}-\frac{u_x^9}{6u_{xx}^3}, \quad b = u+ \frac{u_x^6}{2 u_{xx}^2},\quad 
c=  \frac{1}{u_x}-\frac{u_x^3}{u_{xx}}.
\EqTag{newGFI}
\end{equation}
With the multiplication map on $G$ given by $(a,b,c)\cdot(a'b'c') = ( a+a'+cb',b+b',c+c')$ the (local) action of $G$ on $M^0$ is
\begin{equation}
\mu(a,b,c; x,u,u_x,u_{xx}) = \left(\ x+a+c u,\ u+b,\  \frac{u_x}{cu_x+1}, \frac{u_{xx}}{(cu_x+1)^3}\ \right)\, .
\EqTag{Arho}
\end{equation}
From equations \EqRef{Arho} and \EqRef{newGFI}  the equivariance condition $\rho(\mu(a,b,c; x,u,u_x,u_{xx}))=(a,b,c) \cdot \rho(x,u,u_x,u_{xx})$ is easily checked. 

Finally we point out that $\rho$ is the moving frame (see \cite{fels-olver:1999a}) for the action of $G$  on $M^0$ given in \EqRef{Arho}, that is determined from the cross-section $K= \rho^{-1}(0,0,0)$ to the orbits of $G$. Since $K$  corresponds to the level set of a complete set of first integrals, the cross-section $K$ is a solution to the ODE in \EqRef{621}. The solution is determined from $\rho^{-1}(0,0,0)$ using equation \EqRef{newGFI} and is (the prolongation of) $u= -\frac{1}{2}(3x)^{\frac{2}{3}}$. 

\end{Example}

\subsection{Constructing the multiplication function for simply connected solvable $G$.}\StTag{multmap}

The results from Section \StRef{Sreduce} along with Theorem \StRef{CMF} in Section \StRef{Lie3G} will now be used to construct the multiplication map $\mu$ whose existence is guaranteed in Theorem \StRef{LG1} using only $n$ quadratures and matrix exponentials. This is given by the following theorem.

\begin{Theorem}\StTag{L3M}  Let $\lieg$ be an $n$-dimensional solvable Lie algebra and let $\beta=\{ {\bf e}_i \}_{1 \leq i \leq n}$ be a basis for $\lieg$ adapted to a sequence of codimension one ideals $\liek_s\subset\liek_{s-1}$, $s=1,\ldots,n$. Let $\bftau=[ \tau^1,\ldots,\tau^n]^T$ be the $n$-vector of left invariant $1$-forms defined in equation \EqRef{Lieom} on the corresponding simply connected Lie group $G$ with identity chosen to be ${\bf 0}\in \real^n$ (Theorem \StRef{LG1}).

On $G \times G$ with coordinates $(x^1,\ldots,x^n,y^1,\ldots, y^n)$ define the $n$-vector differential $1$-forms $\bfomega=[\omega^1,\ldots,\omega^n]^T$ by
\begin{equation}
\bfomega = \rme^{-y^n [\ad ({\bf e_n})]}\cdots  \rme^{-y^1 [\ad( {\bf e_1})] }  
 \pi_1^* \bftau + \pi_2^*\bftau
\EqTag{htheta}
\end{equation}
where  $\pi_1(x,y)=x$ and $\pi_2(x,y)=y$ are the two projection maps on $G\times G$. 

\noindent
{\bf [i]} The forms $\omega^i$ satisfy
\begin{equation}
d \omega^i + \frac{1}{2} C^i_{jk} \omega^j \wedge \omega^k
\EqTag{dbfom}
\end{equation}
where $C^i_{jk}$ are the structure constants of $\lieg$ in the basis $\beta$.

\noindent 
{\bf [ii]} Let $\mu:G \times G \to G$ with $\mu({\bf 0},{\bf 0})={\bf 0}$  be the map constructed in Theorem \StRef{CMF},  so that $\omega^i=\mu^*\tau^i$. Then $\mu:G\times G\to G$ is the multiplication map for the Lie group $G$ with basis of left invariant forms $\{ \tau^i \}_{1\leq i \leq n} $ and identity ${\bf 0} \in \real^n$.
\end{Theorem}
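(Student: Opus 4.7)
The strategy is to identify $\bfomega$ with $\mu_0^*\bftau$, where $\mu_0:G\times G\to G$ is the actual group multiplication on $G=\real^n$ produced in Theorem~\StRef{LG1} with identity $\mathbf{0}$. Granted this identification, part \textbf{[i]} follows at once because the structure equations \EqRef{dbfom} are the pullback under $\mu_0$ of the Maurer--Cartan equations for $\bftau$, and part \textbf{[ii]} follows from the uniqueness clause of Theorem~\StRef{CMF}: the map $\mu$ there is the unique one with $\mu(\mathbf{0},\mathbf{0})=\mathbf{0}$ satisfying $\mu^*\bftau=\bfomega$, and $\mu_0$ manifestly has these two properties.

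For any Lie group $H$ with left--invariant Maurer--Cartan form $\tau_H$, the computation $\mu_{H*,(h,k)}(V,W)=(L_h)_*W+(R_k)_*V$ gives the standard identity
\[
\mu_H^*\tau_H \;=\; \Ad(k^{-1})\,\pi_1^*\tau_H + \pi_2^*\tau_H .
\]
Comparing with \EqRef{htheta} applied to $H=G$, the identification $\bfomega=\mu_0^*\bftau$ is equivalent to the matrix identity
\[
A(y)\;:=\;\rme^{-y^n[\ad(\bfe_n)]}\cdots \rme^{-y^1[\ad(\bfe_1)]}\;=\;\Ad_G(y^{-1}) \qquad (y\in G).
\]
Using the functoriality $\Ad(\exp X)=\rme^{\ad X}$, this is in turn equivalent to the assertion that the coordinates $(y^1,\ldots,y^n)$ on $G$ supplied by Theorem~\StRef{LG1} are canonical coordinates of the second kind relative to the adapted basis; that is, $y\in G=\real^n$ corresponds, under the canonical isomorphism with the abstract simply connected Lie group $G^*$ with Lie algebra $\lieg$, to $\psi(y):=\exp(y^1\bfe_1)\cdots \exp(y^n\bfe_n)$.

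Verifying this coordinate identification is the principal obstacle. The plan is to pull $\tau_{G^*}$ back through $\psi$. A direct application of the Leibniz rule together with $\Ad(\exp(tX))=\rme^{t\ad X}$ yields
\[
\psi^*\tau_{G^*}\;=\;\sum_{j=1}^{n}\!\Big(\rme^{-y^n\ad(\bfe_n)}\cdots \rme^{-y^{j+1}\ad(\bfe_{j+1})}\,\bfe_j\Big)\,dy^j .
\]
On the other hand, reading off the $j$-th column of the block-diagonal matrix product in \EqRef{Lieom} amounts to successively applying its factors to $\bfe_j$. Using repeatedly that $\ad_s(\bfe_{n-s})$ agrees with $\ad(\bfe_{n-s})$ on $\liek_s$, and that after applying the factors indexed by $s>n-j-1$ the intermediate vector remains in $\liek_{n-j-1}\subset\liek_s$ at every subsequent step, a column-by-column unwinding shows the $j$-th column of \EqRef{Lieom} is precisely $\rme^{-y^n\ad(\bfe_n)}\cdots \rme^{-y^{j+1}\ad(\bfe_{j+1})}\bfe_j$. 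Hence $\psi^*\tau_{G^*}=\bftau$, and by Sharpe's uniqueness --- on which the construction of $\bftau$ in Theorem~\StRef{PLT} implicitly rests --- the coordinate diffeomorphism $\real^n\simeq G^*$ coming from Theorem~\StRef{LG1} agrees with $\psi$. Then $\Ad_G(y)=\Ad_{G^*}(\psi(y))=\rme^{y^1\ad(\bfe_1)}\cdots\rme^{y^n\ad(\bfe_n)}=A(y)^{-1}$, yielding the required identity and proving both \textbf{[i]} and \textbf{[ii]}.
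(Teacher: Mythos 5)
Your proposal is correct, and it reaches the conclusion by a genuinely different route from the paper at the one place where real work is needed. Both arguments ultimately rest on the same two ingredients: the standard identity $\mu^*\bftau=\Ad(y^{-1})\,\pi_1^*\bftau+\pi_2^*\bftau$ (Sharpe, Prop.\ 4.10) and the matrix identity $\Ad(y^{-1})=\rme^{-y^n[\ad(\bfe_n)]}\cdots\rme^{-y^1[\ad(\bfe_1)]}$, which is the paper's Corollary \StRef{CC1}; once these are in hand, part \textbf{[i]} is the pullback of the Maurer--Cartan equations and part \textbf{[ii]} is Sharpe's uniqueness, exactly as you say. The difference is in how Corollary \StRef{CC1} is obtained. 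The paper proves it by running its own reduction algorithm (Corollary \StRef{C1}) on the product group $G\times G$ applied to the Pfaffian system $\spn\{\pi_2^*\tau^i-\pi_1^*\tau^i\}$ (Lemmas \StRef{PreAd} and \StRef{Pmu}), identifying the resulting map with $(x,y)\mapsto\mu(y,x^{-1})$ and then comparing the two expressions for its pullback of $\bftau$. You instead recognize the coframe \EqRef{Lieom} as $\psi^*\tau$ for $\psi(y)=\exp(y^1\bfe_1)\cdots\exp(y^n\bfe_n)$ --- i.e.\ that the coordinates of Theorem \StRef{LG1} are canonical coordinates of the second kind --- by the column-by-column unwinding of the block factors, after which the $\Ad$ formula is immediate from $\Ad(\exp X)=\rme^{\ad X}$. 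Your route is shorter and more conceptual, and it yields as a bonus an explicit identification of the coordinate chart; the paper's route has the virtue of staying entirely inside the quadrature/exponential framework it is advertising (the $G\times G$ computation is itself a worked instance of the algorithm) and of producing Lemmas \StRef{PreAd} and \StRef{Pmu}, which carry independent content about computing $\mu$ in practice. One point to keep explicit in your version: you need $\psi$ to be the identity map of $\real^n$ (equivalently a global diffeomorphism onto the group of Theorem \StRef{LG1}), which follows from simple connectivity together with the uniqueness of the map with prescribed pullback $\psi^*\bftau=\bftau$ and base point $\psi(\mathbf{0})=\mathbf{0}$; you invoke this correctly, and since Theorem \StRef{LG1} already supplies the ambient group there is no circularity in appealing to its exponential map.
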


Part {\bf ii]} in Theorem \StRef{L3M} states that if $(z^i)$ denote coordinates on the target $G$ with $\tau^i$ in equation \EqRef{Lieom} given in the $z^i$ coordinates,  and $f^i(x,y)$ the functions determined by applying Corollary \StRef{C2} to $\bfomega$ in equation $\EqRef{htheta}$, then the multiplication map is given explicitly by $z^i =f^i(x,y)$. This is demonstrated in Example \StRef{Ex3} below. The choice of identity element ${\bf 0}$ in Theorem \StRef{L3M}  was made for simplicity.



In order to prove Theorem \StRef{L3M} we first present a way to construct the multiplication map $\mu:G\times G \to G$ using integral manifolds. The proof of the following theorem is given in Section 7 of \cite{gardner:1989a}.

\begin{Theorem}\StTag{IntMG} Let $G$ be a simply connected Lie group and let $\{\tau^i\}_{1\leq i \leq n}$ be a basis of left invariant $1$-forms on $G$. On $G \times G$  define the rank $n$ Pfaffian system $I =\spn \{\theta^1,\ldots,\theta^n \}$ where
\begin{equation}
\theta^i =\pi_2^* \tau^i-\pi_1^*\tau^i.
\EqTag{CTI}
\end{equation}
The Pfaffian system $I$ is completely integrable. If $(x_0,y_0) \in G\times G$ and $\CalL_{(x_0,y_0)}$ is the maximal integral manifold through $(x_0,y_0)$, then
\begin{equation}
\CalL_{(x_0,y_0)}= \{\ ( x,   L_{y_0x_0^{-1}} x) , \quad x\in G\ \},
\EqTag{prem}
\end{equation}
where $L_{y_0 x_0^{-1}}$ is left multiplication by $\mu(y_0, x_0^{-1})$. 
\end{Theorem}


We now determine a  set of generators for $I=\spn\{\theta^1,\ldots,\theta^n\}$, with $\theta^i$ given in equation \EqRef{CTI}, which will allow us to determine the maximal integral manifolds of $I$ by quadratures and matrix exponentiation. This will determine the multiplication map using equation \EqRef{prem} (see Lemma \StRef{Pmu}).  The details are contained in the next two lemmas.

\begin{Lemma}\StTag{PreAd} Let $\lieg$ be an $n$-dimensional solvable Lie algebra and let $\beta=\{ {\bf e}_i \}_{1 \leq i \leq n}$ be a basis for $\lieg$ adapted to a sequence of codimension one ideals $\liek_s\subset\liek_{s-1}$, $s=1,\ldots,n$. Let $\bftau=[ \tau^1,\ldots,\tau^n ]^T$ be the vector of left invariant $1$-forms on $G$  defined in equation \EqRef{Lieom}.  Let  $\tilde \theta^i$ be the $n$ 1-forms on $G\times G$ given by
\begin{equation}
\tilde {\bftheta} = \rme^{x^1 [\ad(\be_1)]} \cdots \rme^{x^n [\ad(\be_n)]}
\left(\pi_2^* \bftau-\pi_1^*\bftau  \right).
\EqTag{bftf2}
\end{equation}
These form a basis for the Pfaffian system $I$ in Theorem \StRef{IntMG}, $I =\spn\{ \tilde \theta^1,\dots, \tilde \theta^n \}$ where the structure equations for $\tilde \theta^i$ are
\begin{equation}
d \tilde \theta^i + \frac{1}{2} C^i_{jk} \tilde \theta^j \wedge \tilde \theta^k =0.
\EqTag{dtt0}
\end{equation}
\end{Lemma}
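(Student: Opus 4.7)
The first claim, $I = \spn\{\tilde\theta^1,\ldots,\tilde\theta^n\}$, is immediate: the coefficient matrix $A(\bfx) = \rme^{x^1[\ad(\be_1)]}\cdots\rme^{x^n[\ad(\be_n)]}$ is a product of matrix exponentials, hence invertible at every point, so $\tilde\bftheta = A(\bfx)(\pi_2^*\bftau-\pi_1^*\bftau)$ generates the same rank-$n$ subbundle of $T^*(G\times G)$ as the original generators $\theta^i = \pi_2^*\tau^i-\pi_1^*\tau^i$ of $I$.

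My plan for the structure equations \EqRef{dtt0} is a direct calculation in the style of the proof of Theorem \StRef{reduce}. Each factor $\rme^{t[\ad(\be_i)]}$ is the exponential of the derivation $\ad(\be_i)$ and is therefore a Lie algebra automorphism of $\lieg$, as is their product $A$. Setting $\boldsymbol{\sigma} = \pi_2^*\bftau-\pi_1^*\bftau$ so that $\tilde\bftheta = A\boldsymbol{\sigma}$, I apply Leibniz to obtain
\[
d\tilde\bftheta = dA\wedge\boldsymbol{\sigma} + A\,d\boldsymbol{\sigma}.
\]
Pulling back the Maurer-Cartan equations for $\bftau$ on each factor (and noting that pullbacks from distinct factors do not mix under $d$) gives $d\boldsymbol{\sigma}$ in closed form, and the automorphism property of $A$ lets me pull $A$ inside the Lie brackets. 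Expanding $\tfrac12[\tilde\bftheta\wedge\tilde\bftheta]$ in the same basis, the assertion $d\tilde\bftheta + \tfrac12[\tilde\bftheta\wedge\tilde\bftheta] = 0$ reduces to
\[
dA\wedge\boldsymbol{\sigma} \;=\; A\,[\pi_1^*\bftau\wedge\boldsymbol{\sigma}],
\]
which will follow from the stronger matrix-valued one-form identity $A(\bfx)^{-1}dA(\bfx) = [\ad(\bftau)]$ on $G = \real^n$, pulled back via $\pi_1$.

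The main obstacle is verifying this matrix identity. My plan is induction on $n$: split off the last factor as $A = A'B_n$ with $B_n = \rme^{x^n[\ad(\be_n)]}$, giving
\[
A^{-1}dA = B_n^{-1}(A'^{-1}dA')B_n + [\ad(\be_n)]\,dx^n,
\]
and observe that \EqRef{Lieom} exhibits $\bftau$ with exactly the analogous recursive structure $\bftau = \rme^{-x^n[\ad(\be_n)]}\,\hat{\bftau}$, where $\hat\tau^n = dx^n$ and the remaining $\hat\tau^a$ ($a<n$) are the corresponding Maurer-Cartan form for the codimension-one ideal $\liek_1 \subset \lieg$. Matching the two decompositions via the inductive hypothesis applied to $A'$ produces the claimed identity. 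The delicate point I anticipate is that the full $n\times n$ matrix $[\ad(\be_i)]$ for $i<n$ has a nontrivial action on $\be_n$ (since $[\be_i,\be_n]\in\liek_1$ need not vanish), so the off-diagonal block must be tracked carefully when identifying $A'^{-1}dA'$ with the $\ad$-action of the $\liek_1$-Maurer-Cartan form and when conjugating by $B_n$.
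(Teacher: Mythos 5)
Your proposal is correct but follows a genuinely different route from the paper's. The paper never differentiates $\tilde\bftheta$ directly: it treats the $2n$ forms $[\bftheta,\pi_1^*\bftau]^T$ as a left-invariant coframe on $G\times G$ dual to the basis $\{X_1,\dots,X_n,X_1+Y_1,\dots,X_n+Y_n\}$ of $\lieg\times\lieg$, exhibits the sequence of codimension one ideals $\liel_a=\spn\{X_1,\dots,X_n,X_1+Y_1,\dots,X_{n-a}+Y_{n-a}\}$, and runs the first $n$ steps of the reduction algorithm of Corollary \StRef{C1}; the reducing functions come out to be $f^i=x^i$, the product of reduction matrices collapses (using \EqRef{Lieom}) to $\rme^{x^1[\ad(\be_1)]}\cdots\rme^{x^n[\ad(\be_n)]}$ acting on $\bftheta$, and \EqRef{dtt0} then holds by construction as the structure equations of the ideal $\liel_n\cong\lieg$. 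Your route instead reduces \EqRef{dtt0} to the matrix-valued identity $A^{-1}dA=[\ad(\bftau)]$ (entries $C^i_{jk}\pi_1^*\tau^j$); that reduction is correct, and the identity is true --- indeed it is equivalent to the paper's Corollary \StRef{CC1}, $A=[\Ad(x)]$, since $\Ad$ is characterized by $\Ad^{-1}d\Ad=\ad(\bftau)$ together with $\Ad(\mathbf{0})={\rm I}$. Since the paper derives Corollary \StRef{CC1} \emph{from} this lemma, you are reversing the paper's logical order, and so you must prove the identity independently, as you propose. Your induction does close, and the off-diagonal block you flag is the real issue; the cleanest fix is to strengthen the inductive claim to an arbitrary representation $\rho$ of a solvable algebra with adapted basis, namely $\bigl(\prod_a\rme^{x^a\rho(\be_a)}\bigr)^{-1}d\bigl(\prod_a\rme^{x^a\rho(\be_a)}\bigr)=\tau^i\rho(\be_i)$, then apply it to the restriction to $\liek_1$ of the adjoint representation of $\lieg$ on all of $\lieg$, using $B_n\rho(\be_a)B_n^{-1}=\rho\bigl(\rme^{x^n\ad(\be_n)}\be_a\bigr)$ and the recursive form of \EqRef{Lieom} to absorb the conjugation by $B_n$. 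The trade-off: the paper's argument is shorter given the machinery of Section \StRef{Sreduce} and hands Lemma \StRef{Pmu} the explicit reduction data it needs, while yours is more self-contained and makes the appearance of the adjoint representation transparent.
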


The generators $\bfttheta$ for $I$ in \EqRef{bftf2} and Theorem \StRef{CMF} can now be used to construct the multiplication map.

\begin{Lemma}\StTag{Pmu} Suppose the hypothesis of Lemma \StRef{PreAd} on $\lieg$, $\beta$ and $\bftau$ are satisfied and let $\tilde \theta^i$ be the  $1$-forms on $G\times G$ from equation \EqRef{bftf2} which satisfy \EqRef{dtt0}. Let $\rho: G\times G \to G$ be the unique map constructed using Theorem \StRef{CMF} so that
$\rho({\bf 0}, {\bf 0}) = {\bf 0}$ and
$$
\rho^*\tau^i = \tilde \theta^i.
$$
Then
\begin{equation}
\rho(x,y) = \mu(y, x^{-1})
\EqTag{Fxy}
\end{equation}
where $\mu:G\times G\to G$ is the multiplication map with ${\bf 0}\in G$ as the identity. 
\end{Lemma}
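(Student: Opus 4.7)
The plan is to identify $\rho$ with the map $\sigma:G\times G\to G$ defined by $\sigma(x,y):=\mu(y,x^{-1})$ by invoking the uniqueness clause of Theorem \StRef{CMF}. Since $\mathbf{0}\in G$ is the identity, the normalization $\sigma(\mathbf{0},\mathbf{0})=\mu(\mathbf{0},\mathbf{0})=\mathbf{0}$ is immediate, so the task reduces to verifying $\sigma^*\tau^i=\tilde\theta^i$.

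To compute $\sigma^*\bftau$, I would factor $\sigma=\mu\circ\phi$ with $\phi:G\times G\to G\times G$ given by $\phi(x,y)=(y,x^{-1})$, and start from the Maurer--Cartan pullback formula
\[
\mu^*\bftau \;=\; \Ad(\pi_2^{-1})\,\pi_1^*\bftau + \pi_2^*\bftau,
\]
which is the content underlying Theorem \StRef{L3M}\,\textbf{[i]} (equivalently, Proposition~4.10 of \cite{sharpe:1997a}). Pulling back under $\phi$ swaps the roles of the projections and composes the second factor with the inversion $i(g)=g^{-1}$. Combining this with the standard identity $i^*\tau_g=-\Ad(g)\,\tau_g$ (which follows from the right-invariance of $i^*\tau$ together with $(di)_e=-\mathrm{id}_{\lieg}$), a direct calculation collapses to
\[
\sigma^*\bftau \;=\; \Ad(\pi_1)\bigl(\pi_2^*\bftau-\pi_1^*\bftau\bigr).
\]

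The final step is to match this expression with $\tilde{\bftheta}$ in \EqRef{bftf2}. Here I would invoke Corollary \StRef{CC1} in the form used in the proof of Theorem \StRef{L3M}, namely the adjoint factorization
\[
\Ad(x)\;=\;\rme^{x^1[\ad(\be_1)]}\cdots\rme^{x^n[\ad(\be_n)]}
\]
in the adapted coordinates on $G=\real^n$. Substituting this into the displayed expression for $\sigma^*\bftau$ reproduces exactly the right-hand side of \EqRef{bftf2}, so $\sigma^*\bftau=\tilde{\bftheta}$. The uniqueness of the Maurer--Cartan integral (Theorem \StRef{CMF}; cf.\ Theorem~8.7 of \cite{sharpe:1997a}) together with $\sigma(\mathbf{0},\mathbf{0})=\rho(\mathbf{0},\mathbf{0})=\mathbf{0}$ forces $\rho=\sigma$, which is precisely \EqRef{Fxy}.

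The main obstacle I anticipate is the intermediate pullback calculation: one must track carefully where $\Ad$ is evaluated after the substitution $(a,b)=(y,x^{-1})$ in $\mu^*\bftau$, and combine it correctly with the sign and adjoint produced by pulling $\bftau$ back through inversion on the first factor. Once that bookkeeping is in hand, the identification with \EqRef{bftf2} becomes a formal application of Corollary \StRef{CC1}, and the uniqueness argument closes the proof.
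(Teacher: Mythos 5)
Your computation of $\sigma^*\bftau$ for $\sigma(x,y)=\mu(y,x^{-1})$ is correct: combining the product formula $\mu^*\bftau=\Ad(\pi_2^{-1})\,\pi_1^*\bftau+\pi_2^*\bftau$ with the inversion formula $i^*\tau=-\Ad(g)\tau$ does yield $\sigma^*\bftau=\Ad(\pi_1)\bigl(\pi_2^*\bftau-\pi_1^*\bftau\bigr)$, and the closing uniqueness step (Theorem 8.7 of \cite{sharpe:1997a}, plus $\sigma({\bf 0},{\bf 0})={\bf 0}$) is unobjectionable. The gap is in the step where you match this expression with $\bfttheta$ in \EqRef{bftf2}: you invoke Corollary \StRef{CC1}, i.e.\ the factorization $[\Ad(x)]=\rme^{x^1[\ad(\be_1)]}\cdots\rme^{x^n[\ad(\be_n)]}$, but in the paper that corollary is \emph{deduced from} Lemma \StRef{Pmu} --- its proof is precisely your pullback computation run in reverse, starting from $\rho^*\bftau=\bfttheta$. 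Using it to prove Lemma \StRef{Pmu} is therefore circular. Nor is the factorization available for free from elsewhere: it is equivalent to the assertion that the coordinates $(x^i)$ on $G=\real^n$ are canonical coordinates of the second kind, $x\leftrightarrow\exp(x^1\be_1)\cdots\exp(x^n\be_n)$, and nothing in the construction of $G$ (Palais' theorem applied to the complete frame \EqRef{LieX}, with coframe \EqRef{Lieom}) has established that relation at this point in the development. To make your route work you would have to prove that identification, or Corollary \StRef{CC1} itself, independently first; that is a genuine additional piece of work roughly equal in content to the lemma.

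By contrast, the paper's proof never touches $\Ad$. It notes that $\rho^1,\ldots,\rho^n$ are functionally independent first integrals of the completely integrable system $I=\spn\{\pi_2^*\tau^i-\pi_1^*\tau^i\}$ of Lemma \StRef{PreAd}, so the level sets of $\rho$ are the maximal integral manifolds of $I$; those are the graphs $y=\mu(y_0x_0^{-1},x)$ of left translations as in \EqRef{prem}, and $\rho(x,y)=\mu(y,x^{-1})$ is then read off using the normalization $\rho({\bf 0},{\bf 0})={\bf 0}$. That Frobenius-type argument is what lets the paper obtain Lemma \StRef{Pmu} first and only afterwards extract the adjoint factorization of Corollary \StRef{CC1} from it. If you prefer your more direct calculation, restructure it so that the second-kind exponential coordinate statement is proved before, not via, Corollary \StRef{CC1}.
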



We now prove Lemmas \StRef{PreAd} and \StRef{Pmu}.

\begin{proof} (Lemma \StRef{PreAd}) On $G\times G$ consider the vector of $2n$ differential forms given by 
\begin{equation}
\bfOmega=[ \theta^1, \ldots, \theta^n , \pi_1^* \tau^1, \ldots \pi_1^* \tau^n ]^T=\left[ \begin{array}{c} \bftheta \\ \bftau_1 \end{array} \right]
\EqTag{dOm}
\end{equation}
where from equation  \EqRef{CTI} $\theta^i= \pi_2^* \tau^i-\pi_1^*\tau^i$ and $\bftau_1 = \pi_1^* \bftau $.
 The structure equations for $\bfOmega$ follow from equation \EqRef{LTF} and are
\begin{equation}
\begin{aligned}
d \theta ^i +\frac{1}{2}C^i_{jk} \theta^j \wedge \theta^k + C^i_{jk} \theta^j \wedge \tau_1^k & =0, \\
d \tau_1^i  +\frac{1}{2}C^i_{jk} \tau_1^j \wedge \tau_1^k & = 0.
\end{aligned}
\EqTag{SE1}
\end{equation}
The $2n$ forms in \EqRef{dOm} are a basis of left invariant forms on $G \times G$ which are dual to the basis for  $\lieg \times \lieg$ of left invariant vector fields given by
\begin{equation}
\gamma= \{ \ Y_1,\ldots, Y_n,  X_1+Y_1, X_2+Y_2,\ldots , X_n+Y_n \ \}
\EqTag{BCD1}
\end{equation}
where $\{X_1,\ldots, X_n,Y_1\ldots,Y_n\}$ is the dual frame to the coframe $\{ \pi_1^* \tau^1,\ldots, \pi_1^* \tau^n, \pi_2^* \tau^1, \ldots, \pi_2^* \tau^n \}$  on $G\times G$.

Define subalgebras $\liel_{s} \subset \lieg$, $s=0,\ldots, 2n$ by
\begin{equation}
\liel_{a}= \spn\{\ Y_1,\ldots, Y_n, X_1+Y_1, X_2+Y_2,\ldots , X_{n-a}+Y_{n-a}\ \}, \quad 0\leq a \leq n-1,
\EqTag{firstl}
\end{equation}
and
\begin{equation}
\liel_{n+a} = \spn\{\ Y_1,\, X_2,\ldots ,\, Y_{n-a} \ \}, \quad  0 \leq a \leq n.
\EqTag{secondl}
\end{equation}
It is then simple to show that $\liel_s\subset \liel_{s-1}$, $s=1,\ldots,2n$ forms a sequence of codimension one ideals for the solvable Lie algebra $\lieg \times \lieg$. Furthermore, by definition, the basis \EqRef{BCD1} is adapted to the sequence of codimension one ideals $\liel_s\subset \liel_{s-1}, s=1\ldots 2n$.

We now apply Corollary \StRef{C1} using the first $n$  terms in the sequence, $\liel_{a}, a=0,\ldots, n$. This will result in a change in the coframe $[ \bftheta, \bftau_1 ]^T$ on $G\times G$ where the terms $C^i_{jk} \theta^j \wedge \tau^k_1$ on the first line of equation \EqRef{SE1} don't appear (see the paragraph following the proof of Theorem \StRef{reduce}). 

In order to apply Corollary \StRef{C1} we need to compute 
$$
\ad_a (X_{n-a} +Y_{n-a}) : \liel_{a} \to \liel_{a}, \quad 0\leq a \leq n
$$
in terms of the basis \EqRef{BCD1}. This is easily done giving
$$
[ \ad_a (X_{n-a} +Y_{n-a}) ]=\left[ \begin{array}{cc} [\ad(\be_{n-a})] & 0 \\ 0 & [\ad_a(\be_{n-a})] \end{array} \right]
$$
where $[\ad_a(\be_{n-a})]$ is the $n-a$ by $n-a$ 
matrix representation of the derivation defined in equation \EqRef{alseq} in the basis $\beta=\{\be_i\}_{1\leq i \leq n}$ for $\lieg$, and we have used the isomorphism \EqRef{giso}.

By using $\Omega^{2n} = dx^n$ in equation \EqRef{dOm}  we determine the right most matrix in equation \EqRef{seqreduce} by using $f^n= x^n$. The first reduction (which is equivalent to one application of Theorem \StRef{reduce}) is then given by
\begin{equation}
\hat \bfOmega =  
\left[ \begin{array}{cc} \rme^{x^n[\ad(\be_n)]} & {\bf 0}_n  \\ {\bf 0}_n^T & \rme^{x^n [\ad(\be_n)]} \end{array} \right]
 \left[ \begin{array}{c} \bftheta \\ \bftau_1  \end{array} \right].
\EqTag{bfOm1}
\end{equation}
Now using equation \EqRef{Lieom} and the fact that $\rme^{x^n[\ad(\be_n)]} \rme^{-x^n[\ad(\be_n)]}  = {\rm I}_{n}$ equation \EqRef{bfOm1} simplifies to
\begin{equation}
\hat \bfOmega =  
\left[ \begin{array}{c} \rme^{x^n[\ad(\be_n)]} \bftheta  \\ A_{n-1}(x^{n-1}) \ldots A_{2}(x^2) {\bf dx}.
\end{array} \right]
\EqTag{bfOm2}
\end{equation}
where
\begin{equation}
A_{n-a}(x^{n-a}) = \left[ \begin{array}{cc} \rme^{-x^{n-a}[\ad_a(\be_{n-a})]} & {\bf 0}_a \\ {\bf 0}^T_a & {\rm I}_{a} \end{array} \right], \quad a=0,\ldots, n-2.
\EqTag{PM}
\end{equation}
are the matrices in equation \EqRef{Lieom} and \EqRef{bfOm2}.

Continuing by induction we have that $f^i = x^i$ and that after $n$ steps, Corollary \StRef{C1} produces
$$
\left[ \begin{array}{cc} \rme^{x^1[\ad(\be_1)]} & {\bf 0}  \\ {\bf 0} ^T & {\rm I}_n \end{array} \right]
\left[ \begin{array}{cc} \rme^{x^2[\ad(\be_2)]} & {\bf 0}   \\ {\bf 0} ^T & A_2(-x^2)  \end{array}\right]\cdots
\left[ \begin{array}{cc} \rme^{x^n[\ad(\be_n)]} & {\bf 0}  \\ {\bf 0} ^T & A_n(-x^n) \end{array} \right]
\left[ \begin{array}{c} \bftheta \\ \bftau  \end{array} \right] =  \left[ \begin{array}{c} {\tilde \bftheta} \\ {\bf dx}  \end{array} \right].
$$
where again $A_s(x^s)$ are given in \EqRef{PM} and $\tilde \bftheta$ are the sought after forms in equation \EqRef{bftf2}. By construction, the structure equations  \EqRef{dtt0} hold which proves the lemma.
\end{proof}


We now turn to the proof of Lemma \StRef{Pmu}.

\begin{proof} (Lemma \StRef{Pmu})  Theorem \StRef{CMF} applies to ${\bfttheta}$ in equation \EqRef{bftf2}, and so let $\rho:G\times G \to G$ be the unique function satisfying,
$$
\rho^* \tau^i =\tilde \theta^i,
$$
and $\rho({\bf 0},{\bf 0})={\bf 0}$, where ${\bf 0}$ is the identity in $G$.
Let $(x_0,y_0)\in G \times G$, $z_0= \rho(x_0,y_0)$, and let 
\begin{equation}
\CalN = \{ (x,y) \in G \times G \ | \ \rho(x,y) =z_0 \}= \rho^{-1}(z_0).
\EqTag{calN}
\end{equation}
The functions $\rho^1,\ldots,\rho^n$ are $n$ functionally independent first integral of $I$ and so
the embedded manifold $\CalN$ is an integral manifold of $I$ of dimension $n$ which contains $(x_0,y_0)$.  

Let  $\CalL$ be the maximal integral manifold of $I$ through $(x_0,y_0)$ (Theorem \StRef{IntMG}). By Theorem \StRef{IntMG} shows that the projection map $\pi_1 : G \times G$ restricted to $\CalL$ is a diffeomorphism, while by the connectivity of $\CalL$ implies $\CalL \subset \CalN$. Therefore $\pi_1:\CalN \to G$ is a covering map and hence a diffeomorphism and consequently $\CalN = \CalL$. A similar argument shows that $\pi_2:\CalN \to G$ is a diffeomorphism.

The integral manifold $\CalL$ (and hence $\CalN$ in equation \EqRef{calN}) is given by the graph in \EqRef{prem}. Therefore given any $x\in G$ we can solve the equations $ \rho(x,y)=z_0$  for $y$   giving
\begin{equation}
y = g(z_0, x) )= \mu(y_0 x_0^{-1}, x).
\EqTag{detz}
\end{equation}
Equation \EqRef{detz} implies $ z_0 = \mu( y_0, x_0^{-1}) $. Since $(x_0,y_0)$ was arbitrary this proves the theorem.
\end{proof}

\begin{Corollary}\StTag{CC1} Let $\lieg$ be a solvable Lie algebra with basis $\beta=\{ \be_i\}_{1\leq i\leq n}$ adapted to a sequence of codimension one ideals $\liek_s\subset \liek_{s-1},\ s=1,\ldots,n$, and let $G$ be the Lie group in Theorem \StRef{L3M} with basis of left-invariant forms $\{ \tau^i \}_{1\leq i \leq n}$ given in equation \EqRef{Lieom} with coordinates $(x^i)_{1\leq i \leq n}$ for $G$. Then 
\begin{equation}
[\Ad(x)] =   \rme^{x^1 [\ad ({\bf e_1})] }\ldots\rme^{x^n [\ad( {\bf e_n})] }
\EqTag{Adx}
\end{equation}
where $[\Ad(x)]$ is the matrix representation of $\Ad(x),\ x \in G$ in the basis $\beta$. 
\end{Corollary}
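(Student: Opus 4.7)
My plan is to compute the pullback $\rho^*\bftau$ in two different ways, where $\rho:G\times G\to G$ is the map produced by applying Theorem \StRef{CMF} to the one-forms $\tilde\bftheta$ of equation \EqRef{bftf2} in Lemma \StRef{PreAd}, normalized so that $\rho({\bf 0},{\bf 0})={\bf 0}$.

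First, by the construction in Theorem \StRef{CMF}, I have $\rho^*\tau^i=\tilde\theta^i$, and \EqRef{bftf2} then gives directly
$$\rho^*\bftau=\rme^{x^1[\ad(\be_1)]}\cdots\rme^{x^n[\ad(\be_n)]}\,\bftheta,$$
where $\bftheta=\pi_2^*\bftau-\pi_1^*\bftau$. Next, Lemma \StRef{Pmu} tells me that $\rho(x,y)=\mu(y,x^{-1})$, so the composition $\mu\circ(\rho,\pi_1):G\times G\to G$ sends $(x,y)$ to $\mu(yx^{-1},x)=y$, i.e.\ $\mu\circ(\rho,\pi_1)=\pi_2$. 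Pulling $\bftau$ back through this identity and using the standard Maurer--Cartan product formula $\mu^*\bftau=[\Ad(y^{-1})]\pi_1^*\bftau+\pi_2^*\bftau$ (Proposition~4.10 of \cite{sharpe:1997a}) with the substitution $g=\rho(x,y)$, $h=x$ yields
$$\pi_2^*\bftau=[\Ad(x^{-1})]\,\rho^*\bftau+\pi_1^*\bftau,$$
which rearranges to $\rho^*\bftau=[\Ad(x)]\,\bftheta$.

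Equating the two expressions for $\rho^*\bftau$ produces the identity
$$[\Ad(x)]\,\bftheta=\rme^{x^1[\ad(\be_1)]}\cdots\rme^{x^n[\ad(\be_n)]}\,\bftheta$$
of $\lieg$-valued one-forms on $G\times G$. To upgrade this to the desired matrix equality, I would evaluate both sides at a point $(x,y)\in G\times G$ on the $n$ tangent vectors $V_j=(0,(X_j)_y)\in T_{(x,y)}(G\times G)$, where $\{X_j\}$ is the dual frame to $\{\tau^j\}$ on $G$. Since $\theta^i(V_j)=\tau^i(X_j)=\delta^i_j$, the matrix $[\theta^i(V_j)]$ equals the identity, and the corollary follows.

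The main subtlety I anticipate is organizing the composition $\mu\circ(\rho,\pi_1)=\pi_2$ and applying the adjoint-conjugation formula for $\mu^*\bftau$ with the right inversion and substitution of arguments; once that bookkeeping is in place the rest of the argument is a one-line matching. A minor point worth noting is that this derivation is non-circular: it uses only Lemmas \StRef{PreAd} and \StRef{Pmu} together with the Maurer--Cartan formula for $\mu$ on an abstract Lie group, and does not invoke Theorem \StRef{L3M}, whose own proof in turn relies on Corollary \StRef{CC1}.
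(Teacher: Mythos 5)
Your proposal is correct and follows essentially the same route as the paper: both arguments equate the expression $\rho^*\bftau=\tilde\bftheta=\rme^{x^1[\ad(\be_1)]}\cdots\rme^{x^n[\ad(\be_n)]}\bigl(\pi_2^*\bftau-\pi_1^*\bftau\bigr)$ from Lemmas \StRef{PreAd} and \StRef{Pmu} with a group-theoretic computation of $\rho^*\bftau$ for $\rho(x,y)=\mu(y,x^{-1})$. The only differences are cosmetic: you obtain the group-theoretic side by pulling back through $\mu\circ(\rho,\pi_1)=\pi_2$ rather than invoking right-invariant forms as in \EqRef{pbad} (the two expressions agree since $\pi_1^*\bftau_R=[\Ad(x)]\pi_1^*\bftau$), and your evaluation on the vectors $V_j=(0,(X_j)_y)$ makes explicit the final step that the paper leaves implicit.
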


\begin{proof}  Let  $\rho :G \times G \to G$ be $\rho(x,y) = \mu(y, x^{-1})$. In components using the basis $\beta$ we have
\begin{equation}
\rho^* {\bftau} = [Ad(x)] \pi_2^* \bftau- \pi_1^* \bftau_R,
\EqTag{pbad}
\end{equation}
where $\bftau_R$ are the right invariant $1$-forms on $G$ which are equal to the left invariant $1$-forms $\bftau$ at the identity ${\bf 0}$. 
From Lemma \StRef{Pmu} we also have  $\rho^* \bftau = \tilde \bftheta $. Using the expression in equation \EqRef{bftf2} for $\tilde \bftheta$ and equation \EqRef{pbad} gives 
$$
\rme^{x^1 [\ad(\be_1)]} \cdots \rme^{x^n [\ad(\be_n)]}
\left(\pi_2^* \bftau-\pi_1^*\bftau  \right)=[Ad(x)] \pi_2^* \bftau- \pi_1^* \bftau_R
$$
leading to the expression for $[\Ad(x)]$ in equation \EqRef{Adx}.
\end{proof}

We now prove Theorem \StRef{L3M}. 
\begin{proof} (Theorem \StRef{L3M}) The proof relies on equation \EqRef{Adx} in  Corollary \StRef{CC1} written as
$$
[\Ad(\bfy^{-1})] = \rme^{-y^n [\ad( {\bf e_n})]}\cdots  \rme^{-y^1 [\ad( {\bf e_1})] }.
$$
Therefore equation \EqRef{htheta} can be written
\begin{equation}
\bfomega =  [\Ad(\bfy^{-1})] \pi_1^* \bftau + \pi_2^*\bftau.
\EqTag{htheta2}
\end{equation}
Proposition 4.10 in \cite{sharpe:1997a} and equation \EqRef{htheta2} shows that 
\begin{equation}
\omega^i =\mu^*\tau^i
\EqTag{mpb}
\end{equation} 
where $\mu:G\times G \to G$ is the multiplication map. By taking the exterior derivative of equation \EqRef{mpb} we find  $\bfomega$ in \EqRef{htheta} satisfy the structure equations \EqRef{dbfom}.  This proves part {\bf i]}.

To prove part {\bf [ii]}, let $\rho:G\times G\to G$ be constructed by Theorem \StRef{CMF}, where $f^i$ are
chosen to satisfy $f^i({\bf 0},{\bf 0})= 0$. Theorem 8.7 of \cite{sharpe:1997a} shows that the multiplication function, $\mu:G\times G \to G$, is the unique map satisfying $\mu^* \tau^i = \omega^i$ and $\mu({\bf 0},{\bf 0}) = {\bf 0}$. Therefore the function $\rho=\mu$  is the multiplication map.
\end{proof}

\begin{Remark}\StTag{RightF}
If instead of the forms $\bfomega$ in equation \EqRef{htheta} of Theorem \StRef{L3M} we take
\begin{equation}
\bfomega = 
 \pi_1^* \bftau +  \rme^{-x^n [\ad ({\bf e_n})]}\cdots  \rme^{-x^1 [\ad( {\bf e_1})] } \pi_2^*\bftau
\EqTag{hthetar}
\end{equation}
in the construction of $\rho:G\times G \to G$ with $\rho^*\tau^i=\omega^i$, then $\rho$ is the multiplication map where the forms $\tau^i$ are {\it right} invariant. 
\end{Remark}

\begin{Example} \StTag{Ex3} We continue with Example \StRef{Ex2} and produce the multiplication map using Theorem \StRef{L3M} for the corresponding Lie groups where according to Theorem \StRef{LG1}, $\tau^i, i=1,\ldots, 5$ in equation \EqRef{Ex1forms} form a basis for the left invariant forms. First we need the forms $\omega^i, \ i=1,\ldots,5$ in equation \EqRef{htheta} in Theorem \StRef{L3M}. We compute the matrix $[\Ad(\bfy^{-1})]$ in equation \EqRef{htheta}  using the structure constants in equation \EqRef{SCe1} (or see equation \EqRef{htheta2}) to be
\begin{equation}
\begin{aligned}
&&& [\Ad(\bfy^{-1})]= \rme^{-y^5 [\ad ({\bf e_5})]}\cdots  \rme^{-y^1 [\ad( {\bf e_1})] } = \qquad \qquad \qquad \qquad \qquad \qquad \qquad \qquad \qquad \qquad \\
&&& \left[
\begin{array}{ccccc}{\rme}^{ a\, y^5 +b\, y^4 } & 0&0&-b \, y^1 {\rme}^{a\, y^5 + b\, y^4 }&-a\, y^1 {\rme}^{a\, y^5 a+b\, y^4}\\
0&{\rme}^{y^4} \cos y^5  &{\rme}^{y^4} \sin y^5 &-{\rme}^{y^4} (y^2 \cos y^5+y^3 \sin y^5 )&{\rme}^{y^4} (y^2\sin y^5-y^3 \cos y^5  )\\
0&-{\rme}^{y^4}\sin y^5 &{\rme}^{y^4} \cos y^5 &{\rme}^{y^4} (y^2 \sin y^5-y^3 \cos y^5 )&{\rme}^{y^4} (y^2 \cos y^5+y^3 \sin y^5 )\\
0&0&0&1&0 \\
0&0&0&0&1 
\end{array} \right]
\end{aligned}
\EqTag{CFE}
\end{equation}
We multiply the vector of forms in equation \EqRef{Ex1forms} by the matrix in \EqRef{CFE} to produce the forms ${\bfomega}$ in equation \EqRef{htheta} 
\begin{equation}
\begin{aligned}
\omega^1& =\rme^{a\, y^5+ b \, y^4} \left( d y^1 +  \rme^{a\, x^5+ b \, x^4}dx^1  - a\,  y^1 d x^5 - b y^1 dx^4\right)\\
\omega^ 2& = \rme^{y^4}\left(  \rme^{x^4} ( \cos(x^5+y^5) dx^2+ \sin(x^5+y^5)dx^3) +\cos y^5  dy^2+\sin y^5 dy^3\right.\\ 
& \left. \qquad \qquad -(y^2\cos y^5 +y^3 \sin y^5)dx^4+
(y^2 \sin y^5 -y^3\cos y^5)dx^5 \right)\\
\omega ^3& =  \rme^{y^4}\left(  \rme^{x^4} ( \cos(x^5+y^5) dx^3-\sin(x^5+y^5)dx^2) +\cos y^5  dy^3-\sin y^5 dy^2\right.\\ 
& \left. \qquad \qquad 
(y^2\sin y^5 -y^3 \cos y^5)dx^4+
(y^2 \cos y^5 +y^3\sin y^5)dx^5 \right)\\
\omega^ 4& =dx^4+dy^4 \\
\omega^5& = dx^5 + dy^5
\end{aligned}
\EqTag{MCEF} 
\end{equation}
We now find, by quadratures, the functions in Corollary \StRef{C2} which satisfy $f^i({\bf 0},{\bf 0})= 0$. 

We find $f^4, f^5$ from $d\omega^4=0$ and $d\omega^5=0$ in the last two lines of equation \EqRef{MCEF} to be,
\begin{equation}
\begin{aligned}
f^5&=x^5 + y^5\\
f^4&=x^4 + y^4.
\end{aligned}
\EqTag{Ef45}
\end{equation}
Reducing  $\bfomega$ by using Theorem \StRef{reduce} two times applied to equation \EqRef{MCEF} we get,
\begin{equation}
\begin{aligned}
\hbfomega & = \left[ \begin{array}{cc} \rme^{(x^4+y^4 )[\ad_1(\be_4)]} & {\bf 0}_1 \\ {\bf 0}_1^T & 1 \end{array} \right] \rme^{(x^5+y^5) [\ad(\be_5)]} \bfomega \\
& =   \left[ \begin{array}{ccccc} \rme^{-b (x^4+y^4)} & 0 &  0 &0 & 0  \\ 0 & \rme^{-(x^4+y^4)} & 0 &0 & 0  \\ 0 & 0 & \rme^{-(x^4+y^4)} &0 & 0   \\ 0 & 0& 0 & 1 & 0 \\ 0 & 0 & 0 & 0& 1   \end{array} \right]
\left[ \begin{array}{ccccc} \rme^{-a (x^5+y^5)} & 0 &  0 & 0 & 0 \\ 0 & \cos (x^5+y^5)  &- \sin (x^5+y^5)  & 0& 0 \\ 0 & \sin (x^5+y^5)  & \cos (x^5+y^5) & 0  & 0\\ 0 & 0  &0 & 1 & 0\\ 0 & 0 & 0& 0 & 1\end{array} \right]
 \left[  \begin{array}{c} \omega^1 \\ \omega^2 \\ \omega^3 \\ \omega^4 \\ \omega^5 \end{array} \right] \\
& = \left[ \begin{array}{c} 
dx^1+ \rme^{-a \, x^5-b\, x^4}\left(dy^1 -b\, y^1 dx^4-a \, y^1 dx^5\right) \\ 
dx^2+\rme^{-x^4}\left(\cos x^5 dy^2 -\sin x^5 dy^3-(y^2\cos x^5 -y^3\sin x^5 )dx^4-(y^3\cos x^5+y^2\sin x^5)dx^5\right) \\ 
dx^3+\rme^{-x^4}\left(\sin x^5 dy^2 +\cos x^5 dy^3-(y^3\cos x^5+y^2\sin x^5 )dx^4+(y^2\cos x^5-y^3\sin x^5 )dx^5\right) \\ 
dx^4+dy^4\\ 
dx^5+dy^5 \end{array} \right]
\end{aligned}
\EqTag{domh}
\end{equation}
where we have used  the matrix exponentials computed in equation \EqRef{Adms}.
Since $[\ad_3(\be_2)]$ $[\ad_2(\be_3)]$ are zero matrices equation there is no more reduction to be done in equation \EqRef{FIE},  and $d \homega^3=0, d\homega^2=0$ and $d \homega^1=0$ in equation \EqRef{domh}. This produces the final three functions ($\hat \omega^i=df^i,  f^i({\bf 0},{\bf 0})=0$) by quadratures,
\begin{equation}
\begin{aligned}
f^3 &=x^3 + \rme^{-x^4} (y^2 \sin x^5 +  y^3 \cos x^5), \\
f^2 & =x^2 + \rme^{-x^4}( y^2 \cos x^5 -y^3 \sin x^5 ),\\
f^1 &= x^1 + \rme^{-a\, x^5  - b\, x^4 } y^1 .
\end{aligned}
\EqTag{Ef123}
\end{equation}
In accordance with Theorem \StRef{L3M} the multiplication map $z^i=f^i(x,y)$ is given using equations \EqRef{Ef45} and \EqRef{Ef123} by
\begin{equation}
\begin{aligned}
z^1 &= x^1 + \rme^{-a\, x^5  - b\, x^4 } y^1 ,\\
z^2 & =x^2 +\rme^{-x^4}( y^2 \cos x^5 -y^3 \sin x^5 ),\\
z^3 &=x^3 + \rme^{-x^4} (y^2 \sin x^5 +  y^3 \cos x^5), \\
z^4&=x^4 + y^4,\\
z^5&=x^5 + y^5.
\end{aligned}
\EqTag{MM}
\end{equation}
Equation \EqRef{MM} is the multiplication map ${ z}=\mu({ x}, {y})$ for the simply connected $5$-dimensional Lie group $G$ with basis of left invariant forms in \EqRef{MCEF} and ${\bf 0}$ as the identity.
\end{Example}

\section{Conclusion}   It seems possible that an explicit formula for the multiplication map for a simply connected solvable Lie group can be given in terms of the structure constants of the Lie algebra using the techniques developed in this paper, but the author has not been able to find it.

\begin{bibdiv}
\begin{biblist}

\bib{anderson-fels:2005a}{article}{
  author={Anderson, I. M.},
  author={Fels, M. E.},
  title={Exterior Differential Systems with Symmetry},
  journal={Acta. Appl. Math.},
  year={2005},
  volume={87},
  pages={3--31},
}

\bib{anderson-fels:2015a}{article}{
  author={Anderson, I. M.},
  author={Fels, M. E.},
  title={B\"acklund transformations for Darboux integrable
differential systems},
  journal={Sel.Math.New Ser.},
  year={2015},
  volume={21},
  pages={379-448},
}


\bib{doubrov:2000a}{article}{
author={B.M. Doubrov, B.P. Komrakov},
title={The constructive equivalence problem in differential geometry},
journal={Mat. Sb}, 
volume={191},
year={2000}, 
pages={655--681}
}

\bib{fels:2007a}{article}{
  author = {Fels, M. E.}, 	
  title={Integrating ordinary differential equations with symmetry revisited},
  journal ={Foundations of Computational Math.},
  volume = {7},
  year = {2007},
  pages = {417-454}
}

\bib{fels:2008a}{article}{
  author={Fels, M. E.},
  title={Exterior Differential Systems with Symmetry},
  journal={The IMA Volumes in Mathematics and its Applications},
  year={2008},
  volume={144, II},
  pages={351--366},
  }

\bib{fels-olver:1999a}{article}{  
author={M.Fels},author={P.J.Olver}, 
title={Moving Coframes II. Regularization and theoretical foundations}, 
journal={Acta. Appl. Math.}, 
volume={55},
year={1999},
pages={127--208}
}

\bib{flanders:1989a}{book}{
author={H. Flanders},
title={Differential Forms with Applications to the Physical Sciences},
publisher = {Dover},
year={1963,1989}
}

\bib{gardner:1989a}{book}{
author={R.B. Gardner},
title={The Method of Equivalence and is Applications},
publisher = {SIAM},
year={1989}
}

\bib{griffiths:1974a}{article}{
  author={Griffiths, P.},
  title={On Cartan's method of Lie groups and moving frames as applied to uniqueness and existence questions in differential geometry},
  journal={Duke Math. J.},
  year={1974},
  volume={41},
  pages={775--814}
}

\bib{jacobson:1962a}{book}{
author={N. Jacobson},
title={Lie Algebras},
publisher = {Dover},
year={1962}
}

\bib{lychagin:1991a}{article}{  
author={S.V. Duzhin},author={V.V. Lychagin}, 
title={Symmetries of Distributions and quadrature of ordinary
differential equations}, 
journal={Acta. Appl. Math.}, 
volume={24},
year={1991},
pages={29--57}
}


\bib{olver:1998a}{book}{
author={Olver, P.J.},
title={Applications of Lie Groups to Differential Equations},
publisher={Springer-Verlag},
year={1998},
}

\bib{onishchik:1993a}{book}{
  author = {A.L. Onishchik (Ed.)},
   title = {Lie groups, Lie algebras I},
 publisher = {Springer-Verlag},
  year={1993},
}


\bib{sharpe:1997a}{book}{
  author = {Sharpe, R.W.},
   title = {Differential Geometry: Cartan's Generalization of Klein's Erlangen Program},
 publisher = {Springer},
  year={1997},
}

\bib{vanest:1988a}{article}{
author={W. Van Est}, 
title={Une d\'emonstration de \'E. Cartan du troisei\`eme th\'eor\`eme de Lie},
journal={Travaux en Cours [Works in Progress], Hermann, Paris },
volume={27}, 
year={1988},pages={83--96}
}

\bib{winternitz:1976a}{article}{
author={J. Patera}, author={R. T. Sharp}, author={P. Winternitz}, author={H. Zassenhaus},
title={Invariants of real low dimensional Lie algebras},
journal={Journal of Math. Phys.},
volume={17}, number={6},
year={1976},pages={966--994}
}

\end{biblist}
\end{bibdiv}

\end{document}